\def\K{\mathbb K}
\def\cC{\mathcal C}
\def\cD{\mathcal D}
\def\K{\mathbb{K}}
\def\PG{{\mathrm{PG}}}
\newtheorem{theorem}{Theorem}[section]
\newtheorem{proposition}[theorem]{Proposition}
\newtheorem{lemma}[theorem]{Lemma}
\newtheorem{corollary}[theorem]{Corollary}
\newtheorem{example}[theorem]{Example}
\journal{European Journal of Combinatorics}
\begin{document}

\begin{frontmatter}


\title{Classification of $k$-nets }
\author[focal]{N.~Bogya}
\ead{nbogya@math.u-szeged.hu}
\author[els]{G.~Korchm\'aros\corref{cor1}\fnref{fn1}}
\ead{gabor.korchmaros@unibas.it}
\author[focal,gac]{G.P.~Nagy\fnref{fn2}}
\ead{nagyg@math.u-szeged.hu}
\cortext[cor1]{Corresponding author}
\fntext[fn1]{\,work carried out within the activity of GNSAGA.}
\fntext[fn2]{\,work carried out within the project ``Telemedicine-focused research activities on the field of Matematics, Informatics and Medical sciences''. Project number: TAMOP-4.2.2.A-11/1/KONV-2012-0073.}
\address[focal]{Bolyai Institute, University of Szeged,
Aradi v\'ertan\'uk tere 1, 6725 Szeged (Hungary)}
\address[els]{Dipartimento di
Matematica, Informatica ed Economia, Universit\`a della Basilicata, Contrada Macchia
Romana, 85100 Potenza (Italy)}
\address[gac]{MTA-ELTE Geometric and Algebraic Combinatorics Research Group, P\'azm\'any P. s\'et\'any 1/c, H-1117 Budapest (Hungary)}

\begin{abstract}
 A finite \emph{$k$-net} of order $n$ is an incidence structure consisting of $k\ge 3$ pairwise disjoint classes of lines, each of size $n$, such that every point incident with two lines from distinct classes is incident with exactly one line from each of the $k$ classes. Deleting a line class from a $k$-net, with $k\ge 4$,  gives a \emph{derived} ($k-1$)-net of the same order.
 Finite $k$-nets embedded in a projective plane $\PG(2,\K)$ coordinatized by a field $\K$ of characteristic $0$ only exist for $k=3,4$, see  \cite{knp_k}. In this paper, we investigate $3$-nets embedded in $\PG(2,\K)$ whose line classes are in perspective position with an axis $r$, that is, every point on the line $r$ incident with a line of the net is incident with exactly one line from each class. The problem of determining all such $3$-nets remains open whereas we obtain a complete classification for those coordinatizable by a group. As a corollary, the (unique) $4$-net of order $3$ embedded in $\PG(2,\K)$ turns out to be the only $4$-net embedded in $\PG(2,\K)$ with a derived $3$-net which can be coordinatized by a group. Our results hold true in positive characteristic under the hypothesis that the order of the $k$-net considered is smaller than the characteristic of $\K$.
 \end{abstract}
\begin{keyword}
$k$-net\sep projective plane\sep group
\end{keyword}
\end{frontmatter}
\section{Introduction}
Finite $3$-nets occur naturally in combinatorics since they are geometric representations of important objects as latin squares, quasigroups, loops and strictly transitive permutation sets. Historically, the concept of $3$-net  arose from
classical differential geometry via the combinatorial abstraction of the concept of a $3$-web; see \cite{NS}. In recent years finite $3$-nets embedded in a projective plane $\PG(2,\K)$ coordinatized by a field $\K$  were investigated in algebraic geometry and resonance theory, see \cite{fy2007,miq,per,ys2004,ys2007}, and a few infinite families of such $3$-nets were constructed and classified, see \cite{knp_3}.

In this paper, we deal with finite $3$-nets embedded in $\PG(2,\K)$ such that the three line classes of the $3$-net appear to be in perspective position with axis $r$, that is, whenever a point $P\in r$ lies on a line of the $3$-net then $P$ lies on exactly one line from each line classes of the $3$-net. If a $3$-net is in perspective position then the corresponding latin square has a transversal, equivalently, at least one of the quasigroups which have the latin square as a multiplicative table has a complete mapping; see \cite[Section 1.4]{DK}. A group has a complete mapping if and only its $2$-subgroups of Sylow are either trivial or not cyclic. This  was conjectured in the 1950's by Hall and Paige \cite{HallPaige}, see \cite[p. 37]{DK}, and proven only recently by Evans \cite{evans}.

As in \cite{knp_3}, most of the known examples in this paper arise naturally in the dual plane of $\PG(2,\K)$, and it is convenient work with the dual concept of a $3$-net embedded in $\PG(2,\K)$.
Formally, a {\em{dual $3$-net}} in $\PG(2,\K)$ consists of a triple $(\Lambda_1,\Lambda_2,\Lambda_3)$ with $\Lambda_1,\Lambda_2,\Lambda_3$ pairwise disjoint point-sets, called {\em{components}}, such that every line meeting two distinct components meets each component in precisely one point. Every component has the same size $n$, the \emph{order} of the dual $3$-net, and each of the $n^2$ lines meeting all components is a \emph{line} of the dual $3$-net.

 A dual $3$-net  $(\Lambda_1,\Lambda_2,\Lambda_3)$ is in \emph{perspective position with a center} $C$, where $C$  is a  point off $\Lambda_1\cup\Lambda_2\cup\Lambda_3$, if  every line through $C$  meeting a component is a line of the dual $3$-net, that is, still meets each component in exactly one point. A dual $3$-net in perspective position has a transversal. Furthermore,
 a dual $3$-net may be in perspective position with different centers although the number of such centers is bounded by the order of the $3$-net. If this bound is attained and every lines through two centers is disjoint from
 $\Lambda_1\cup\Lambda_2\cup\Lambda_3$,
 then the set of the centers can be viewed as a new component $\Lambda_4$ to add to $(\Lambda_1,\Lambda_2,\Lambda_3)$ so that the resulting quadruple $(\Lambda_1,\Lambda_2,\Lambda_3,\Lambda_4)$ is a dual $4$-net, that is, a $4$-net in the dual plane.

 {}From previous work
 \cite[Proposition 3.1]{knp_k}, finite dual $4$-nets have constant cross-ratio, that is,  for every line $\ell$ intersecting the components, the cross-ratio $(P_1,P_2,P_3,P_4)$ with $P_i=\Lambda_i\cap \ell$ is the same. For a dual $3$-net in perspective position with a center $C$, this raises the problem whether for all lines $\ell$ through $C$, the cross-ratio of the points $C, \ell\cap \Lambda_1, \ell\cap \Lambda_2, \ell\cap \Lambda_3$ is the same. By our Proposition  \ref{pr:dual_constcross}, the answer is affirmative. Moreover, in case of more than one centers, the cross-ratio does not depend on which center is referred to. Therefore, any dual $3$-net in perspective position has \emph{constant cross-ratio}.

The problem of classifying all $3$-nets in perspective position remains open and appears to be difficult. Its solution would indeed imply the answer to the main conjecture on finite $4$-nets, namely the non-existence of $4$-nets of order greater than three. Our main result in this context is the following theorem that provides a complete classification for those $3$-nets in perspective position which are coordinatizable by a group.
\begin{theorem} \label{theo1.1.14}
Let $\Lambda$ be a dual $3$-net of order $n$ which is coordinatized by a group. Assume that $\Lambda$ is embedded in a projective plane $\PG(2,\K)$ over an algebraically closed field whose characteristic is either $0$ or bigger than $n$. If $\Lambda$ is in perspective position and $n\neq 8$ then one of the following two cases occur:
\begin{itemize}
\item[\rm(i)] A component of $\Lambda$ lies on a line while the other two lie on a nonsingular conic. More precisely, $\Lambda$ is projectively equivalent to the dual $3$-net given in Lemma \ref{lm:conicline_example}.
\item[\rm(ii)] $\Lambda$ is contained in a nonsingular cubic curve $\cC$ with zero $j(\cC)$-invariant, and $\Lambda$ is in perspective position with at most three centers.
\end{itemize}
\end{theorem}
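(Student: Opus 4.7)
The plan is to combine three ingredients: the classification of dual $3$-nets in $\PG(2,\K)$ coordinatized by a group from \cite{knp_3}, the constant cross-ratio property of Proposition \ref{pr:dual_constcross}, and the Hall--Paige theorem — since perspective position for $\Lambda$ is equivalent to the coordinatizing group $G$ admitting a complete mapping. According to the classification, up to projective equivalence $\Lambda$ is one of the following: (a) an \emph{algebraic} dual $3$-net, lying on a plane cubic curve $\cC$; (b) a \emph{conic-line} dual $3$-net of dihedral type (one line-component and two conic-components); (c) a \emph{triangular} dual $3$-net whose three components lie on three lines, realizing a cyclic group; or (d) one of the sporadic examples associated with the groups $A_4$, $S_4$, $A_5$, and with a small family of groups of order $8$. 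I would treat each type separately.

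First I would invoke Hall--Paige \cite{evans, HallPaige} to exclude cyclic groups of even order, the quaternion group $Q_8$, and more generally any group having a non-trivial cyclic Sylow $2$-subgroup. This already forbids the triangular case in half the orders, and it also pinpoints why $n=8$ must be excluded: the small-order-$8$ examples in family (d) provide the genuine obstruction. For the surviving triangular cases (cyclic of odd order), I would use the constant cross-ratio of Proposition \ref{pr:dual_constcross} to force a perspective center $C$ to lie on one of the three supporting lines, contradicting $C\notin \Lambda_1\cup\Lambda_2\cup\Lambda_3$. The sporadic tetrahedral, octahedral and icosahedral examples admit explicit coordinates in \cite{knp_3}, and a short direct computation shows that none of them is in perspective position. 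The conic-line case (b) is exactly outcome (i): a projective-equivalence argument identifies it with the model of Lemma \ref{lm:conicline_example}.

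The heart of the proof is the algebraic case (a). Here each component $\Lambda_i$ is a coset of a fixed subgroup inside the smooth part $\cC^*$ of $\cC$ under its group law, and the collinearity of points from the three components corresponds to the relation $P_1+P_2+P_3=0$. If $C$ is a center of perspective, then projection from $C$ induces an involution on $\cC^*$ which, by the constant cross-ratio, must commute with the translations by the component subgroup. This forces the existence of a non-translation automorphism of $\cC$; for a smooth cubic this happens only when $j(\cC)=0$, in which case the automorphism group modulo translations has order $6$. A careful analysis of how such automorphisms can permute components yields the bound of three centers stated in (ii).

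I expect the main obstacle to lie in this last step: rigorously translating the perspective/cross-ratio condition into an automorphism of the elliptic curve $\cC$, and then tracking the interaction between this extra order-$6$ symmetry and the fixed component subgroup to extract both the $j(\cC)=0$ conclusion and the upper bound of three on the number of centers. Everything else is either a direct invocation of \cite{knp_3} and Hall--Paige or a finite case check.
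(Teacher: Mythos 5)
Your overall strategy (run through the group-realizability classification of \cite{knp_3,nagypace} and treat each type using the constant cross-ratio) is the same as the paper's, but several of your individual steps contain genuine errors. First, the Hall--Paige ingredient does not do the work you assign to it. The quaternion group $Q_8$ has a \emph{non-cyclic} Sylow $2$-subgroup (namely itself), so by Hall--Paige it \emph{does} admit a complete mapping; the hypothesis $n\neq 8$ in the theorem is not explained by Hall--Paige but by the fact that the $Q_8$ net is an unresolved exceptional case of the classification. Second, you have dropped the tetrahedron-type case, which in the classification of \cite{knp_3} is a separate family (each component splits as $\Gamma_i\cup\Delta_i$ with four triangular sub-$3$-nets), distinct from the sporadic $\rm{Alt}_4,\rm{Sym}_4,\rm{Alt}_5$ examples that only threaten in positive characteristic for $n\in\{12,24,60\}$. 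The paper needs a dedicated argument here (Proposition \ref{propjan12a}): since $n>4$, some triangular sub-$3$-net carries at least two lines through the center, and the triangular argument applies to it. Relatedly, for the regular (triangular) case the paper does not argue that the center lands on a supporting line; it shows via the constant cross-ratio that a perspectivity with center $\ell_1\cap\ell_2$ maps $\Lambda_3$ into a line through that point, so the three lines are concurrent (pencil type), and pencil-type dual $3$-nets do not exist when the characteristic is $0$ or exceeds $n$. This disposes of all regular nets at once, with no parity split and no appeal to Hall--Paige.

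The most serious gap is in the irreducible-cubic case, which you correctly identify as the heart of the matter. A line through a point $T$ \emph{off} the cubic meets it in three points, so projection from $T$ does not induce an involution on the curve; and even if you produce a non-translation automorphism, that cannot force $j(\cC)=0$, since every elliptic curve carries the inversion $P\mapsto -P$. You would need an automorphism of order $3$, and in the paper that only emerges \emph{after} $j=0$ is established. The paper's actual route (Proposition \ref{pr:cubic_j0}) is computational: put $\Gamma$ in Legendre form, write the cross-ratio invariant $u(k)$ of $T$ and the three intersection points of a line of slope $m$ as $f(m)^3/g(m)^2$, observe that seven perspective lines force this rational function to be constant, and eliminate to obtain $c^2-c+1=0$ (hence $j(\Gamma)=0$) together with the conclusion that $T$ is one of the three corners of the Hessian triangle --- which is precisely where the bound of three centers comes from. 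Your sketch, as written, would not reach either conclusion.
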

Theorem \ref{theo1.1.14} provides evidence on the above mentioned conjecture about $4$-nets. In fact, it shows for $n\neq 8$ that the (unique) $4$-net of order $3$ embedded in $\PG(2,\K)$ is the only $4$-net embedded in $\PG(2,\K)$ which has a derived $3$-net coordinatized by a group $G$. This result remains valid in positive characteristic under the hypothesis that that the order $n$ of the $k$-net considered is smaller than the characteristic of $\K$, apart from possibile sporadic cases occurring for $n\in\{12,24,60\}$ and  $G\cong \rm{Alt}_4, \rm{Sym}_4,\,\rm{Alt}_5$, respectively.

The proof of Theorem \ref{theo1.1.14} follows from Propositions \ref{propjan10a}, \ref{propjan12a}, \ref{pr:conicline} and Theorem \ref{thcub} together with the classification of $3$-nets coordinatized by groups, see \cite{knp_3} and \cite{nagypace}, which states that the dual of such a $3$-net is either algebraic (that is, contained in a reducible or irreducible cubic curve), or of  tetrahedron type, or $n=8$ and $G$ is the quaternion group of order $8$. This classification holds true in positive characteristic if the characteristic of $\K$ exceeds the order $n$ of $3$-net and none of the above mentioned special cases for $n=12,24,60$ occurs.

\section{The constant cross-ratio property}
In \cite[Proposition 3.1]{knp_k}, the authors showed that (dual) $4$-nets have constant cross-ratio, that is, for any line intersecting the components, the cross-ratio of the four intersection points is constant. In this section we prove a similar result for  (dual) $3$-nets in perspective positions. Our proof relies on some ideas coming from \cite{knp_k}.

\begin{proposition} \label{pr:pencil_crossratio}
Let $F,G$ be homogeneous polynomials of degree $n$ such that the curves $\mathcal{F}:F=0$ and $\mathcal{G}:G=0$ have $n^2$ different points in common.
Fix nonzero scalars $\alpha, \beta,\alpha',\beta' \in \K$, and define the polynomials
\[H=\alpha F+\beta G, H'=\alpha' F+\beta' G\]
and the corresponding curves $\mathcal{H}:H=0$, $\mathcal{H'}:H'=0$. Then, for all $P\in \mathcal{F} \cap \mathcal{G}$, the tangent lines $t_P(\mathcal{F})$, $t_P(\mathcal{G})$, $t_P(\mathcal{H})$, $t_P(\mathcal{H'})$ have cross-ratio
\[\kappa=\frac{\alpha \beta'}{\alpha'\beta}. \]
\end{proposition}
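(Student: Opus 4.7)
The plan is to reduce the computation of the cross-ratio of the four tangent lines at a common point $P$ to a cross-ratio of four points in the projective line parameterizing the pencil $\langle F,G\rangle$. The key preliminary observation is that since $\cF$ and $\cG$ are curves of degree $n$ meeting in exactly $n^2$ points, B\'ezout's theorem is attained with equality. Hence every point of $\cF\cap\cG$ is a simple point of both curves and the two curves meet transversally there; in particular the linear parts of $F$ and $G$ at $P$ are nonzero and not proportional.

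I would then pass to affine coordinates with $P$ at the origin and expand
\[
F = L_F + F_{\ge 2}, \qquad G = L_G + G_{\ge 2},
\]
where $L_F,L_G$ are the (linearly independent) linear parts and $F_{\ge 2},G_{\ge 2}$ collect the terms of order $\ge 2$. By linearity of the pencil,
\[
H = \alpha F + \beta G = (\alpha L_F+\beta L_G) + (\alpha F_{\ge 2}+\beta G_{\ge 2}),
\]
and analogously for $H'$. Since the linear parts $\alpha L_F+\beta L_G$ and $\alpha' L_F+\beta' L_G$ are nonzero (they are nontrivial combinations of two independent linear forms), the point $P$ remains a simple point of $\cH$ and $\cH'$, and the four tangent lines at $P$ are the zero loci in the tangent plane of
\[
L_F,\qquad L_G,\qquad \alpha L_F+\beta L_G,\qquad \alpha' L_F+\beta' L_G.
\]

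The final step is a cross-ratio computation in the pencil of lines through $P$. Choosing $L_F,L_G$ as a basis of this pencil, the four tangent lines correspond to the projective points $[1{:}0]$, $[0{:}1]$, $[\alpha{:}\beta]$, $[\alpha'{:}\beta']$, whose cross-ratio is the determinantal expression yielding $\kappa=\alpha\beta'/(\alpha'\beta)$ (up to the ordering convention adopted for the four arguments, which must be matched to the statement). The only genuine obstacle is the opening remark: one must justify that the $n^2$ common points are transverse simple intersections so that all four tangent directions exist and are well defined. This is exactly where the hypothesis $|\cF\cap\cG|=n^2$, attaining the B\'ezout bound, is used, and once it is in hand the rest is bookkeeping with linear parts.
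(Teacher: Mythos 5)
Your proposal is correct and follows essentially the same route as the paper: both use the fact that $|\mathcal{F}\cap\mathcal{G}|=n^2$ forces transverse simple intersections via B\'ezout, then read off the tangent lines of $\mathcal{H}$ and $\mathcal{H}'$ at $P$ as the corresponding linear combinations of the linear parts of $F$ and $G$, and finish with the cross-ratio of $[1{:}0]$, $[0{:}1]$, $[\alpha{:}\beta]$, $[\alpha'{:}\beta']$ in the pencil of lines through $P$. The only cosmetic difference is that the paper normalizes coordinates so that the two tangent lines are $X=0$ and $Y=0$, whereas you keep the linear parts $L_F,L_G$ abstract; the computation is identical.
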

\begin{proof}
We start with three observations. Notice first that for any $P\in \mathcal{F} \cap \mathcal{G}$, the intersection multiplicity of $\mathcal{F}$ and $\mathcal{G}$ at $P$ must be $1$ by B\'ezout's theorem. This implies that $P$ is a smooth point of both curves, and that the tangent lines $t_P(\mathcal{F})$, $t_P(\mathcal{G})$ are different.
Second, the polynomials $F,G,H,H'$ are defined up to a scalar multiple. Multiplying them by scalars such that the curves $\mathcal{F}, \mathcal{G}, \mathcal{H}, \mathcal{H}'$ don't change, the value of the cross-ratio $\kappa$ remains invariant as well. And third, the change of the projective coordinate system leaves the homogeneous pairs $(\alpha,\beta)$, $(\alpha',\beta')$ invariant, hence it does not affect $\kappa$.

Let us now fix an arbitrary point $P\in \mathcal{F} \cap \mathcal{G}$ and choose the projective coordinate system such that $P=(0,0,1)$, $t_P(\mathcal{F}):X=0$, $t_P(\mathcal{G}):Y=0$. We set $Z=0$ as the line at infinity and switch to affine coordinates $x=X/Z$, $y=Y/Z$. For the polynomials we have
\begin{align*}
F(x,y,1) &=x+f_2(x,y), & G(x,y,1)&=y+g_2(x,y), \\
H(x,y,1) &=\alpha x+ \beta y+h_2(x,y), & H'(x,y,1)&=\alpha' x+\beta'y+ h_2'(x,y),
\end{align*}
with polynomials $f_2,g_2,h_2,h_2'$ of lower degree at least $2$. This shows that the respective tangent lines have equations
\[x=0, y=0, \alpha x+ \beta y=0, \alpha' x+ \beta' y=0,\]
hence, the cross-ratio is indeed $\kappa$.
\end{proof}

Let $(\lambda_1,\lambda_2,\lambda_3)$ be a $3$-net of order $n$, embedded in $\PG(2,\K)$. Let
\[r_1=0, \dots, r_n=0, w_1=0,\ldots, w_n=0, t_1=0,\ldots, t_n=0\]
be the equations of lines of $\lambda_1,\lambda_2,\lambda_3$. Define the polynomials
\[F=r_1\cdots r_n, G=w_1\cdots w_n, H=t_1\cdots t_n;\]
these have degree $n$ and the corresponding curves have exactly $n^2$ points in common. Moreover, the tangents in the intersection points are different; in fact, they are the lines of the dual $3$-net. As explained in \cite{knp_k}, there are scalars $\alpha,\beta \in \K$ such that $H=\alpha F + \beta G$.

Let $\ell$ be a transversal line of $(\lambda_1,\lambda_2,\lambda_3)$, that is, assume that $\ell$ intersects all lines of the $3$-net in the total of $n$ points $P_1,\ldots,P_n$. Let $Q$ be another point of $\ell$, that is, $Q\neq P_i$, $i=1,\ldots,n$. There are unique scalars $\alpha',\beta'$ such that the curve $\mathcal{H'}: \alpha' F+\beta' G=0$ passes through $Q$. As $\mathcal{H}'$ has degree $n$ and $|\ell\cap \mathcal{H'}|\geq n+1$, $\ell$ turns out to be a component of $\mathcal{H}'$. This means that $\mathcal{H}'=\ell \cup \mathcal{H}_0$ for a curve $\mathcal{H}_0$ of degree $n-1$. Moreover, since $\mathcal{H}_0$ cannot pass through $P_1,\ldots,P_n$, the tangent lines of $\mathcal{H}'$ at these points are equal to $\ell$. Proposition \ref{pr:pencil_crossratio} implies the following.

\begin{proposition}[Constant cross-ratio for $3$-nets with transversal] \label{pr:constcross}
Let $\lambda=(\lambda_1,\lambda_2,\lambda_3)$ be a $3$-net of order $n$, embedded in $\PG(2,\K)$. Assume that $\ell$ is a transversal to $\lambda$. Then there is a scalar $\kappa$ such that for all $P\in \ell\cap \lambda$, $m_1\in \lambda_1$, $m_2\in \lambda_2$, $m_3 \in \lambda_3$, $P=m_1\cap m_2\cap m_3$, the cross-ratio of the lines $\ell, m_1, m_2, m_3$ is $\kappa$. \qed
\end{proposition}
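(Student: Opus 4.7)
The plan is to make precise the construction outlined in the paragraph immediately preceding the statement and then invoke Proposition~\ref{pr:pencil_crossratio}. With $F,G,H$ the degree-$n$ polynomials whose zero sets $\mathcal{F},\mathcal{G},\mathcal{H}$ cut out the three line classes $\lambda_1,\lambda_2,\lambda_3$ of $\lambda$, we already have that $\mathcal{F}\cap\mathcal{G}$ consists of the $n^2$ points of the $3$-net, and there exist fixed scalars $\alpha,\beta\in \K$ with $H=\alpha F+\beta G$.

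First, I would fix a point $Q\in\ell$ different from $P_1,\ldots,P_n$ and choose scalars $\alpha',\beta'\in \K$ (unique up to a common scalar) so that the curve $\mathcal{H}':\alpha'F+\beta'G=0$ passes through $Q$. Writing $L$ for a linear form of $\ell$, the line $\ell$ then meets $\mathcal{H}'$ in at least the $n+1$ points $P_1,\ldots,P_n,Q$. Since $\deg\mathcal{H}'=n$, B\'ezout forces $L\mid H'$, so we may factor $H'=L\cdot H_0$ with $\deg H_0=n-1$.

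The key step is to verify that the tangent of $\mathcal{H}'$ at every $P_i$ is $\ell$ itself. In the proof of Proposition~\ref{pr:pencil_crossratio} it was shown that $\mathcal{F}$ and $\mathcal{G}$ meet transversally at each $P_i$, with distinct tangents $m_1$ and $m_2$. Consequently, in local affine coordinates $(x,y)$ at $P_i$ one has $F=m_1+O(2)$ and $G=m_2+O(2)$, so every nonzero member of the pencil, including $\mathcal{H}'$, is smooth at $P_i$. In view of the factorisation $H'=L\cdot H_0$ and the smoothness of $\mathcal{H}'$ at $P_i$, the curve $\mathcal{H}_0$ cannot pass through $P_i$ (otherwise $P_i$ would be a singular point of $\mathcal{H}'$), whence the tangent to $\mathcal{H}'$ at $P_i$ is the zero set of $L$, namely $\ell$.

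With this in hand, Proposition~\ref{pr:pencil_crossratio} applied to $\mathcal{F},\mathcal{G},\mathcal{H},\mathcal{H}'$ at $P_i$ gives that the cross-ratio of the tangents $m_1,m_2,m_3,\ell$ at $P_i$ equals $\alpha\beta'/(\alpha'\beta)$; since $\alpha,\beta,\alpha',\beta'$ are fixed independently of the chosen base point, this is the desired constant $\kappa$, and a suitable reordering of the four lines yields the cross-ratio of $\ell,m_1,m_2,m_3$ stated in the proposition. The main obstacle here is precisely the identification of the tangent of $\mathcal{H}'$ at each $P_i$ with $\ell$; once the smoothness of each pencil element at its transverse base points is invoked, everything else is an application of the previously established proposition.
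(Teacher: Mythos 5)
Your argument is essentially identical to the paper's: the paper proves this proposition in the paragraph preceding its statement by forming the pencil member $\mathcal{H}':\alpha'F+\beta'G=0$ through an extra point $Q$ of $\ell$, deducing that $\ell$ splits off as a component, observing that the residual curve $\mathcal{H}_0$ misses the base points so that the tangent to $\mathcal{H}'$ at each $P_i$ is $\ell$, and then invoking Proposition~\ref{pr:pencil_crossratio}. Your write-up in fact supplies a small justification the paper leaves implicit (smoothness of every pencil member at the transverse base points, hence why $\mathcal{H}_0$ cannot pass through the $P_i$), but the route is the same.
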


The dual formulation of the above result is the following

\begin{proposition}[Constant cross-ratio for dual $3$-nets in perspective position] \label{pr:dual_constcross}
Let $\Lambda=(\Lambda_1,\Lambda_2,\Lambda_3)$ be a dual $3$-net of order $n$, embedded in $\PG(2,\K)$. Assume that $\Lambda$ is in perspective position with respect to the point $T$. Then there is a scalar $\kappa$ such that for all lines $\ell$ through $T$, the cross-ratio of the points $T, \ell\cap \Lambda_1, \ell\cap \Lambda_2, \ell\cap \Lambda_3$ is $\kappa$. \qed
\end{proposition}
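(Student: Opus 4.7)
The plan is to deduce the proposition directly from Proposition \ref{pr:constcross} by invoking the standard projective duality between $\PG(2,\K)$ and its dual plane. Under this duality, call it $\delta$, points and lines are interchanged, incidences are preserved, and, crucially, the cross-ratio of four collinear points coincides with the cross-ratio of the four corresponding concurrent lines.

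First I would fix the dualization. The dual $3$-net $\Lambda=(\Lambda_1,\Lambda_2,\Lambda_3)$ corresponds via $\delta$ to a triple $\lambda=(\lambda_1,\lambda_2,\lambda_3)$ of line classes in the dual plane, where $\lambda_i=\delta(\Lambda_i)$, while the center $T$ corresponds to a line $\ell=\delta(T)$. The defining property of a dual $3$-net (every line meeting two components meets each in exactly one point) dualizes to the defining property of a $3$-net (every point on two lines from different classes is on exactly one line from each class), so $\lambda$ is indeed a $3$-net.

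Next I would verify that the hypothesis on $\Lambda$ being in perspective position with center $T$ translates to $\ell$ being a transversal of $\lambda$. A line $\ell'$ through $T$ that meets each $\Lambda_i$ in exactly one point dualizes to a point $P=\delta(\ell')$ lying on $\ell$ and incident with exactly one line of each class $\lambda_i$; this is precisely the condition that $\ell$ be a transversal. Moreover, for such an $\ell'$ the four collinear points $T,\,\ell'\cap\Lambda_1,\,\ell'\cap\Lambda_2,\,\ell'\cap\Lambda_3$ dualize to the four concurrent lines $\ell, m_1, m_2, m_3$ through $P$, with $m_i\in\lambda_i$.

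Finally, Proposition \ref{pr:constcross} asserts that the cross-ratio of $\ell, m_1, m_2, m_3$ takes a constant value $\kappa$ as $P$ ranges over $\ell\cap\lambda$; since duality preserves cross-ratios, the cross-ratio of $T,\,\ell'\cap\Lambda_1,\,\ell'\cap\Lambda_2,\,\ell'\cap\Lambda_3$ equals the same $\kappa$ for every line $\ell'$ through $T$. There is no substantive obstacle here, as the statement is essentially the primal Proposition \ref{pr:constcross} re-expressed in dual language; the only points requiring (routine) verification are the equivalence of the perspective-position condition with the transversal condition, and the preservation of cross-ratio under the point-line duality.
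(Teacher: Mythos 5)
Your proposal is correct and matches the paper's approach exactly: the paper states Proposition \ref{pr:dual_constcross} only as ``the dual formulation'' of Proposition \ref{pr:constcross}, with no further argument, so your routine verification of the dualization (perspective position corresponding to a transversal, and duality preserving cross-ratios) simply makes explicit what the paper leaves implicit.
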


In the case when a component of a dual $3$-net is contained in a line, the constant cross-ratio property implies a high level of symmetry of the dual $3$-net. The idea comes from the argument of the proof of \cite[Theorem 5.4]{knp_k}.

\begin{proposition} \label{pr:ca-coll}
Let $\Lambda=(\Lambda_1,\Lambda_2,\Lambda_3)$ be a dual $3$-net embedded in $\PG(2,\K)$. Assume that $\Lambda$ is in perspective position with respect to the point $T$, and, $\Lambda_1$ is contained in a line $\ell$. Then there is a perspectivity $u$ with center $T$ and axis $\ell$ such that $\Lambda_2^u=\Lambda_3$. 
\end{proposition}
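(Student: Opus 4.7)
The plan is to realize $u$ as the unique homology of $\PG(2,\K)$ with center $T$ and axis $\ell$ whose characteristic matches the constant cross-ratio $\kappa$ supplied by Proposition~\ref{pr:dual_constcross}, and then to verify pointwise that $\Lambda_2^u=\Lambda_3$.

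Before constructing $u$, I would first rule out $T\in\ell$, so that a collineation with center $T$ and axis $\ell$ is in fact a (nontrivial) homology. Since the three components are pairwise disjoint and $\ell$ contains all $n$ points of $\Lambda_1$, the line $\ell$ is not a line of the dual $3$-net, which forces $\Lambda_2\cap\ell=\Lambda_3\cap\ell=\emptyset$. If $T$ lay on $\ell$, then every line $m\neq\ell$ through $T$ would meet $\ell$ only at $T\notin\Lambda_1$, hence would miss $\Lambda_1$; by the perspective position hypothesis $m$ would then miss $\Lambda_2$ as well, contradicting the fact that $T\notin\Lambda_2$ can be joined by a line to any point of $\Lambda_2$.

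Next, I would invoke the standard description of a homology with center $T$ and axis $\ell$ by its characteristic: for any line $m$ through $T$ and any point $P\in m\setminus\{T,m\cap\ell\}$, the cross-ratio $(T,m\cap\ell,P,u(P))$ equals a constant $c\in\K$ independent of $m$ and of $P$, and prescribing $c$ determines $u$ uniquely. Let $u$ be the homology with characteristic $c=\kappa$. For any $P_2\in\Lambda_2$, set $m=TP_2$; perspective position gives $m\cap\Lambda_1=\{Q\}\subset\ell$, $m\cap\Lambda_2=\{P_2\}$, $m\cap\Lambda_3=\{P_3\}$, and Proposition~\ref{pr:dual_constcross} yields $(T,Q,P_2,P_3)=\kappa$. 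Comparing with the defining property of $u$ forces $u(P_2)=P_3$. Varying $P_2$ over $\Lambda_2$ produces $\Lambda_2^u\subseteq\Lambda_3$, and since $|\Lambda_2|=|\Lambda_3|=n$ and $u$ is bijective, equality follows.

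The only potential obstacle is purely notational: aligning the cross-ratio convention used to define the characteristic of $u$ with the one appearing in Proposition~\ref{pr:dual_constcross}. Once the same convention is fixed at the outset, the rest of the argument is essentially book-keeping.
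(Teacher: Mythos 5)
Your proposal is correct and follows essentially the same route as the paper: both define $u$ as the $(T,\ell)$-perspectivity (homology) whose characteristic cross-ratio is the constant $\kappa$ from Proposition~\ref{pr:dual_constcross}, and then read off $\Lambda_2^u=\Lambda_3$ pointwise. Your additional checks (that $T\notin\ell$, so $u$ is a genuine homology, and the cardinality argument upgrading $\Lambda_2^u\subseteq\Lambda_3$ to equality) are details the paper leaves implicit.
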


\begin{proof}
Let $\kappa$ be the constant cross-ratio of $\Lambda$ w.r.t $T$. Define $u$ as the $(T,\ell)$-perspectivity which maps the point $P$ to $P'$ such that the cross-ratio of the points $T$, $P$, $P'$ and $TP\cap \ell$ is $\kappa$. Then $\Lambda_2^u=\Lambda_3$ holds.
\end{proof}

\section{Triangular and tetrahedron type dual $3$-nets in perspective positions}
\label{TT}
With the terminology used in \cite{knp_3}, a dual $3$-net is \emph{regular} if each of its three components is linear, that is, contained in a line. Also, a regular dual $3$-net is  \emph{triangular} or
of \emph{pencil type} according as the lines containing the components form a triangle or are concurrent.
\begin{proposition}
\label{prop10jan} Any regular dual $3$-net in perspective position is of pencil type.
\end{proposition}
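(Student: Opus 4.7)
The plan is to derive the concurrency of the three lines $\ell_1,\ell_2,\ell_3$ carrying the components $\Lambda_1,\Lambda_2,\Lambda_3$ by a single application of Proposition \ref{pr:ca-coll}. Let $T$ be a center of the perspective position.

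First I would verify that $T$ does not lie on any of the lines $\ell_i$. Indeed, suppose $T\in\ell_1$. Then $\ell_1$ itself is a line through $T$ that meets $\Lambda_1$, so by the very definition of perspective position $\ell_1$ would have to be a line of the dual $3$-net, meeting each component in exactly one point. This contradicts $|\ell_1\cap\Lambda_1|=n\geq 2$. Hence $T$ lies off $\ell_1\cup\ell_2\cup\ell_3$, which in particular places $T$ off every component, so the hypotheses of Proposition \ref{pr:ca-coll} are met.

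Next I would apply Proposition \ref{pr:ca-coll} to the component $\Lambda_1\subset\ell_1$: there is a perspectivity $u$ with center $T$ and axis $\ell_1$ such that $\Lambda_2^{u}=\Lambda_3$. A perspectivity with axis $\ell_1$ sends every line $m$ not passing through $T$ to a line through the fixed point $m\cap\ell_1$, so $u(\ell_2)$ is a line through $V_3:=\ell_1\cap\ell_2$. On the other hand, $\Lambda_2$ consists of $n\geq 2$ collinear points spanning $\ell_2$, hence $\Lambda_3=u(\Lambda_2)$ spans the image line $u(\ell_2)$; since $\Lambda_3\subset\ell_3$, this forces $u(\ell_2)=\ell_3$. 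Therefore $\ell_3$ passes through $V_3=\ell_1\cap\ell_2$, so $\ell_1,\ell_2,\ell_3$ are concurrent and the dual $3$-net is of pencil type.

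The only genuinely delicate step is the preliminary exclusion of $T$ from the three lines, where the perspective-position hypothesis has to be squeezed out; the rest is a direct use of Proposition \ref{pr:ca-coll} combined with the elementary fact that a perspectivity permutes non-axial lines according to their intersections with the axis.
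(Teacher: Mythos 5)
Your proof is correct, but it is organized differently from the one in the paper. The paper argues directly from the constant cross-ratio property (Proposition \ref{pr:dual_constcross}): it fixes $R=\ell_1\cap\ell_2$, builds an auxiliary perspectivity $\varphi$ with center $R$ carrying one net line through the center $P$ to another, and uses the equality of the cross-ratios $(P\,L_i\,M_i\,N_i)$ to conclude that $\varphi$ maps $N_1$ to $N_2$, so that every $N_2\in\Lambda_3$ lies on the line $RN_1$; hence $\ell_3$ passes through $R$. You instead route everything through Proposition \ref{pr:ca-coll}, which the paper proves but does not actually invoke at this point: a single $(T,\ell_1)$-perspectivity $u$ with $\Lambda_2^{u}=\Lambda_3$, combined with the elementary observation that a central collineation with axis $\ell_1$ maps $\ell_2$ to a line through $\ell_1\cap\ell_2$, gives $u(\ell_2)=\ell_3$ and hence the concurrency at once. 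Both arguments ultimately rest on the constant cross-ratio; yours is shorter and makes better use of the machinery already established in Section 2. A further point in favor of your write-up is the preliminary verification that $T$ lies off $\ell_1\cup\ell_2\cup\ell_3$ (a line $\ell_i$ through $T$ would have to be a net line, yet meets $\Lambda_i$ in $n\geq 2$ points), which is needed for the $(T,\ell_1)$-perspectivity of Proposition \ref{pr:ca-coll} to be well defined and which the paper leaves implicit. The only cosmetic caveat is that Proposition \ref{pr:ca-coll} is stated with the linear component labelled $\Lambda_1$ and the perspectivity sending $\Lambda_2$ to $\Lambda_3$; in the regular case all three components are linear, so your application with axis $\ell_1$ is legitimate as stated.
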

\begin{proof}
Let $(\Lambda_1,\Lambda_2,\Lambda_3)$ be a dual $3$-net in perspective position with center $P$ such that $\Lambda_i$ is contained in a line $\ell_i$ for $i=1,2,3$. Let $R=\ell_1\cap \ell_2$. Take any two points $L_1,L_2 \in \Lambda_1$ and define $M_1=PL_1\cap \Lambda_2$, $M_2=PL_2\cap \Lambda_2$. There exists a perspectivity $\varphi$ with center $R$ and axis $r$ through $P$ which takes $L_1$ to $L_2$ and $M_1$ to $M_2$. For $i=1,2$,  the line $t_i$ through $P,L_i,M_i$ meets $\Lambda_3$ in a point $N_i$. From Proposition \ref{pr:dual_constcross} the cross ratios $(P\, L_1\, M_1\, N_1)$ and $(P\, L_2\, M_2\, N_2)$ coincide. Therefore, $\varphi$ takes $N_1$ to $N_2$ and hence the line $s=N_1N_2$ passes through $R$. If $L_1$ is fixed while $L_2$ ranges over $\Lambda_1$, the point $N_2$ hits each point of $\Lambda_3$. Therefore $\Lambda_3$ is contained in $s$.
\end{proof}
From \cite[Lemma 3]{knp_3}, dual $3$-nets of pencil type do not exist in zero characteristic whereas in positive characteristic they only exist when the order of the dual $3$-net is divisible by the characteristic. This, together with Proposition \ref{prop10jan}, give the following result.
\begin{proposition}
\label{propjan10a} No regular dual $3$-net in perspective position exists in zero characteristic. This holds true for dual $3$-nets in positive characteristic whenever the order of the $3$-net is smaller than the characteristic. \qed
\end{proposition}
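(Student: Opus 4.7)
The plan is to prove this as an immediate consequence of Proposition \ref{prop10jan} combined with the known nonexistence result for pencil-type dual $3$-nets recalled in the sentence preceding the statement.

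First I would reduce to the pencil-type case. Let $\Lambda=(\Lambda_1,\Lambda_2,\Lambda_3)$ be a regular dual $3$-net of order $n$ in perspective position, so that each component $\Lambda_i$ is contained in a line $\ell_i$. By Proposition \ref{prop10jan}, $\Lambda$ is necessarily of pencil type; that is, the three lines $\ell_1,\ell_2,\ell_3$ are concurrent.

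Next I would invoke \cite[Lemma 3]{knp_3}, which asserts that pencil-type dual $3$-nets do not exist in characteristic zero, while in positive characteristic $p$ they can exist only when $p$ divides the order of the net. In the zero characteristic case this immediately contradicts the pencil-type conclusion of the previous step, giving the first assertion. In the positive characteristic case, the standing hypothesis $n<\mathrm{char}(\K)=p$ together with $n\ge 1$ forces $p\nmid n$, again contradicting the existence of a pencil-type dual $3$-net of order $n$. The two cases together cover the proposition.

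The argument is purely a combination of the two cited results, so there is no substantial obstacle. The only mildly subtle point is checking that ``order smaller than the characteristic'' genuinely excludes the divisibility condition from \cite[Lemma 3]{knp_3}: this is immediate, since any positive integer strictly less than a prime $p$ is coprime to $p$, and in characteristic zero the divisibility condition is vacuous.
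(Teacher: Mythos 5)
Your proposal is correct and follows exactly the paper's own route: the paper derives this proposition precisely by combining Proposition \ref{prop10jan} with \cite[Lemma 3]{knp_3}, and the observation that $n<\mathrm{char}(\K)$ rules out the divisibility condition is the same closing step. Nothing is missing.
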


Now, let $\Lambda=(\Lambda_1,\Lambda_2,\Lambda_3)$ be tetrahedron type dual $3$-net of order $n\ge 4$ embedded in $\PG(2,\K)$. {}From  \cite[Section 4.4]{knp_3}, $n$ is even, say $n=2m$, and
$\Lambda_i=\Gamma_i\cup \Delta_i$ for $i=1,2,3$ such that each triple
\[\Phi_1=(\Gamma_1,\Gamma_2,\Gamma_3),\, \Phi_2=(\Gamma_1,\Delta_2,\Delta_3), \, \Phi_3=(\Delta_1,\Gamma_2,\Delta_3), \, \Phi_4=(\Delta_1,\Delta_2,\Gamma_3)\]
is a dual $3$-net of triangular type. An easy counting argument shows that each of the $n^2$ lines of $\Lambda$ is a line of (exactly) one of the dual $3$-nets $\Phi_i$ with $1\le i \le 4$.

Now, assume that $\Lambda$ is in perspective position with a center $P$. Then each of the $n$ lines of $\Lambda$ passing through $P$ is also a line of (exactly) one $\Phi_i$ with $1\le i \le 4$. Since $n> 4$, some of these triangular dual $3$-nets, say $\Phi$, has at least two lines through $T$. Let $\ell_1,\ell_2,\ell_3$ denote the lines containing the components of $\Phi$, respectively. Now, the proof of Proposition \ref{prop10jan} remains valid for $\Phi$ showing that $\Phi$ is of pencil type. But this is impossible as we have pointed out after that proof. Therefore, the following result is proven.
\begin{proposition}
\label{propjan12a} No tetrahedron type dual $3$-net in perspective position exists in zero characteristic. This holds true for dual $3$-nets in positive characteristic whenever the order of the $3$-net is smaller than the characteristic. \qed
\end{proposition}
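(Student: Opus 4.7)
The plan is to import the argument of Proposition~\ref{prop10jan} into a suitable triangular sub-$3$-net of $\Lambda$. First, I would use the decomposition $\Lambda_i=\Gamma_i\cup\Delta_i$ and the four triangular dual $3$-nets $\Phi_1,\Phi_2,\Phi_3,\Phi_4$ given above, recalling that every line of $\Lambda$ is a line of exactly one of them. Letting $k_j$ denote the number of lines of $\Phi_j$ through the center $P$, we have $k_1+k_2+k_3+k_4=n$, and the hypothesis $n>4$ together with the pigeonhole principle produces an index $j$ with $k_j\ge 2$. Fix such $\Phi:=\Phi_j$, denote by $\ell_1,\ell_2,\ell_3$ the three lines containing its components, and pick two distinct lines $u_1,u_2$ of $\Phi$ passing through $P$. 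Writing $L_i=u_i\cap\ell_1$, $M_i=u_i\cap\ell_2$ and $N_i=u_i\cap\ell_3$, the six points $L_i,M_i,N_i$ lie respectively in the three components of $\Phi$, hence of $\Lambda$.

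Next, I would carry out the perspectivity argument of Proposition~\ref{prop10jan}. Set $R=\ell_1\cap\ell_2$ and consider the $(R,r)$-perspectivity $\varphi$ whose axis $r$ is a line through $P$ distinct from $PR$ and which sends $L_1$ to $L_2$. Because $u_1=L_1M_1$ and $u_2=L_2M_2$ meet precisely at $P\in r$, one checks that $\varphi(M_1)=M_2$. Proposition~\ref{pr:dual_constcross} applied to $\Lambda$ with center $P$ forces the cross-ratios $(P,L_i,M_i,N_i)$ to take a common value $\kappa$ for $i=1,2$, so preservation of cross-ratio under $\varphi$ yields $\varphi(N_1)=N_2$. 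Consequently $R$, $N_1$ and $N_2$ are collinear; since $N_1\neq N_2$ already lie on $\ell_3$, this forces $\ell_3$ to pass through $R$. Thus $\ell_1,\ell_2,\ell_3$ are concurrent and $\Phi$ is of pencil type.

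Finally, I would invoke \cite[Lemma~3]{knp_3}: no dual $3$-net of pencil type exists in characteristic zero, while in positive characteristic such a $3$-net exists only when its order is a multiple of the characteristic, which is ruled out by the hypothesis that the order of $\Lambda$ (and hence of $\Phi$) is smaller than the characteristic of $\K$. This contradicts the existence of $\Phi$ and completes the proof.

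The delicate point I foresee is that $\Phi$ itself need not be in perspective position with $P$, so one must verify that the argument of Proposition~\ref{prop10jan} survives with only \emph{two} lines of $\Phi$ through $P$ instead of all of them. The saving observation is twofold: the constant cross-ratio we invoke lives on $\Lambda$ (not on $\Phi$) and is automatically valid on the two lines $u_1,u_2$; and the triangular structure of $\Phi$ already confines $N_1,N_2$ to the line $\ell_3$, so two distinct collinear images are enough to identify $\ell_3$ and to force the incidence $R\in\ell_3$, without any further assumption on how many lines of $\Phi$ pass through $P$.
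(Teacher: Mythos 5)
Your proof is correct and follows essentially the same route as the paper: decompose $\Lambda$ into the four triangular sub-$3$-nets $\Phi_1,\dots,\Phi_4$, use pigeonhole on the $n>4$ lines through the center to find a $\Phi_j$ with two such lines, rerun the perspectivity/cross-ratio argument of Proposition~\ref{prop10jan} to force $\Phi_j$ to be of pencil type, and conclude via \cite[Lemma~3]{knp_3}. The ``delicate point'' you flag — that $\Phi_j$ need not be in perspective position and that two lines suffice because the constant cross-ratio lives on $\Lambda$ while $\ell_3$ is already determined — is exactly the adaptation the paper leaves implicit in the phrase ``the proof of Proposition~\ref{prop10jan} remains valid for $\Phi$,'' so your write-up is in fact slightly more explicit than the original.
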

 Propositions \ref{propjan10a} and \ref{propjan12a} have the following corollary.
\begin{corollary}
\label{cor12jan}
Let $\K$ be an algebraically closed field whose characteristic is either zero or greater than $n$. Then no dual $4$-net of order $n$ embedded in $\PG(2,\K)$ has a derived dual $3$-net which is either triangular or of tetrahedron type. \qed
\end{corollary}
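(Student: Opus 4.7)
The plan is to reduce the statement to Propositions \ref{propjan10a} and \ref{propjan12a} by showing that any derived dual $3$-net of a dual $4$-net is automatically in perspective position. Suppose $(\Lambda_1,\Lambda_2,\Lambda_3,\Lambda_4)$ is a dual $4$-net of order $n$ embedded in $\PG(2,\K)$ and, after relabeling, let $(\Lambda_1,\Lambda_2,\Lambda_3)$ be the derived dual $3$-net obtained by deleting $\Lambda_4$. Pick any point $P\in\Lambda_4$. Since the four components of a dual $4$-net are pairwise disjoint, $P$ lies off $\Lambda_1\cup\Lambda_2\cup\Lambda_3$, so it is eligible to serve as a center of perspective for the derived dual $3$-net.

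Next, I would verify the perspective-position axiom: given any line $\ell$ through $P$ that meets some component $\Lambda_i$ with $i\in\{1,2,3\}$, I need to show that $\ell$ meets each of $\Lambda_1,\Lambda_2,\Lambda_3$ in exactly one point. But $\ell$ already meets $\Lambda_4$ (at $P$) and $\Lambda_i$, which are two distinct components of the dual $4$-net; by the defining property of a dual $4$-net, $\ell$ must then meet each of the four components in exactly one point. Restricting to $\Lambda_1,\Lambda_2,\Lambda_3$ shows that $\ell$ is a line of the derived dual $3$-net, so $(\Lambda_1,\Lambda_2,\Lambda_3)$ is indeed in perspective position with center $P$.

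At this point the conclusion is immediate. If the derived dual $3$-net were triangular, it would in particular be regular, contradicting Proposition \ref{propjan10a} under the hypothesis that the characteristic of $\K$ is zero or larger than $n$. Likewise, if it were of tetrahedron type, Proposition \ref{propjan12a} would be violated. Either way we reach a contradiction, proving the corollary.

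There is essentially no obstacle here: the entire argument is a short definition-chase to identify points of the fourth component as centers of perspective, after which Propositions \ref{propjan10a} and \ref{propjan12a} do all the work. The only subtlety to be careful about is that the center of a perspective dual $3$-net must, by definition, lie off the three components, which is automatic because dual $4$-net components are pairwise disjoint.
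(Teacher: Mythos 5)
Your argument is correct and is exactly the one the paper intends: the corollary is stated with an immediate \qed precisely because any point of the deleted component $\Lambda_4$ serves as a center of perspective for the derived dual $3$-net, after which Propositions \ref{propjan10a} and \ref{propjan12a} apply (triangular being a special case of regular). Your write-up simply makes this definition-chase explicit.
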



\section{Conic-line type dual $3$-nets in perspective positions}
Let $\Lambda=(\Lambda_1,\Lambda_2,\Lambda_3)$ be dual $3$-net of order $n\ge 5$ in perspective position with center $T$, and assume that $\Lambda$ is of conic-line type. Then $\Lambda$ has a component, say $\Lambda_1$, contained in a line $\ell$, while the other two components $\Lambda_2,\Lambda_3$ lie on a nonsingular conic $\cC$. From Proposition \ref{pr:dual_constcross}, $\Lambda$ has constant cross-ratio $\kappa$ with respect to $T$.


\begin{lemma}
\label{lemclt}
$\kappa=-1$ and $T$ is the pole of $\ell$ in the polarity arising from $\cC$.
\end{lemma}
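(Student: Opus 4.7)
The plan is to combine the constant cross-ratio property (Proposition \ref{pr:dual_constcross}) with a direct coordinate computation. First I would observe that $T\notin\cC$: any line $m$ through $T$ meets $\cC$ at the two distinct points $m\cap\Lambda_2$ and $m\cap\Lambda_3$, both different from $T$, so $T$ cannot itself lie on $\cC$. I would then choose projective coordinates with $T=(0,0,1)$ and $\ell:Z=0$, and write the conic as
\[\cC:\,aX^2+bY^2+cZ^2+2dXY+2eYZ+2fXZ=0,\]
noting that $c\neq 0$ since $T\notin\cC$.

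Next I would parametrize the lines through $T$ by $m_t:X=tY$ and introduce an affine coordinate $s$ on $m_t$ via $(ts,s,1)$, so that $T$ corresponds to $s=0$ and $\ell\cap m_t$ to $s=\infty$. The two points $m_t\cap\Lambda_2,\,m_t\cap\Lambda_3$ are then the roots $s_1,s_2$ of the quadratic
\[(at^2+2dt+b)s^2+2(e+ft)s+c=0,\]
and the cross-ratio in question equals $s_1/s_2$. By Proposition \ref{pr:dual_constcross}, this cross-ratio equals $\kappa$ for each of the $n$ lines of $\Lambda$ through $T$. Passing to the symmetric combination $(s_1+s_2)^2/(s_1s_2)$, which evaluates to the same constant $(\kappa+1)^2/\kappa$ regardless of the labeling of $s_1,s_2$, and applying Vieta's formulas, I obtain a polynomial identity of degree $\leq 2$ in $t$ that holds at $\geq 5$ values of $t$, hence identically. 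Matching coefficients produces
\[f^2=\nu a,\qquad ef=\nu d,\qquad e^2=\nu b,\qquad \text{where }\nu=\frac{(\kappa+1)^2 c}{4\kappa}.\]

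Finally I would split into two cases. If $\nu=0$, then $c\neq 0$ forces $\kappa=-1$ and $e=f=0$; the polar of $T=(0,0,1)$ with respect to $\cC$ is then $fX+eY+cZ=cZ=0$, which is exactly $\ell$, proving the lemma. If instead $\nu\neq 0$, the three relations immediately imply $d^2=ab$, and a short expansion of the determinant of the matrix of $\cC$ using all three relations shows that $\det=0$, contradicting the non-singularity of $\cC$. The step requiring the most care is this final determinant cancellation in the $\nu\neq 0$ case; although the computation is brief, one must use all three relations together to see that the determinant really collapses.
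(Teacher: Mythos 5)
Your proof is correct, but it takes a genuinely different route from the paper's. The paper argues synthetically: it invokes Proposition \ref{pr:ca-coll} to get a $(T,\ell)$-perspectivity $u$ with $\Lambda_2^u=\Lambda_3$, notes that $\cC$ and $\cC^u$ share the $n\geq 5$ points of $\Lambda_3$ and hence coincide, and then uses the classical fact that a nontrivial perspectivity preserving a nonsingular conic is a homology whose center is the pole of its axis, hence an involution, whence $\kappa=-1$. Your argument bypasses Proposition \ref{pr:ca-coll} entirely and works straight from the constant cross-ratio of Proposition \ref{pr:dual_constcross} via Vieta's formulas; I checked the computation and it closes: in the case $\nu=0$ one gets $\kappa=-1$, $e=f=0$, and the polar of $T$ is $fX+eY+cZ=cZ=0=\ell$; in the case $\nu\neq 0$ the relations give $d^2=ab$ and $\det\begin{pmatrix} a&d&f\\ d&b&e\\ f&e&c\end{pmatrix}=c(ab-d^2)-(ae^2+bf^2-2def)=0-(\nu ab+\nu ab-2\nu ab)=0$, contradicting nonsingularity. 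The symmetrization to $(s_1+s_2)^2/(s_1s_2)$ is the right move to kill the labeling ambiguity of the two roots. What the paper's route buys is brevity and a conceptual explanation (the involution $u$ is reused immediately afterwards to parametrize $\Lambda_3$ from $\Lambda_2$, so the paper gets that for free); what your route buys is independence from Proposition \ref{pr:ca-coll} and from the standard conic-preserving-homology fact, which the paper quotes without proof, and it needs only three distinct lines through $T$ rather than five shared points to force the polynomial identity. Two small points to tidy: you should note (as you implicitly assume) that $\Lambda_2,\Lambda_3$ are disjoint from $\ell$, so both roots $s_1,s_2$ are finite and nonzero and the leading coefficient $at^2+2dt+b$ does not vanish for the relevant $t$; and the count of usable $t$-values is $n$ minus possibly one for the excluded line $Y=0$, which is still ample.
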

\begin{proof}
By Proposition \ref{pr:ca-coll}, there is a $(T,\ell)$-perspectivity $u$ which takes $\Lambda_2$ to $\Lambda_3$. The image $\cC'$ of the conic $\cC$ by $u$ contains $\Lambda_3$ and hence  $\Lambda_3$ is in the intersection of $\cC$ and $\cC'$. Since $\Lambda_3$ has size $n\geq 5$, these nonsingular conics must coincide. Therefore, $u$ preserves $\cC$ and its center $T$ is the pole of its axis $\ell$ is in the polarity arising from  $\cC$. In particular, $u$ is involution and hence $\kappa=-1$.
\end{proof}
Choose our projective coordinate system such that $T=(0,0,1)$, $\ell:Z=0$ and that $\cC$ has equation $XY=Z^2$. In the affine frame of reference, $\ell$ is the line at infinity, $T$ is the origin and $\cC$ is the hyperbola of equation $xy=1$. Doing so, the map $(x,y)\mapsto (-x,-y)$ is the involutorial  perspectivity $u$ introduced in the proof of Proposition \ref{lemclt}.

 As it is shown in \cite[Section 4.3]{knp_3}, $\Lambda$ has a parametrization in terms of an $n^{th}$-root of unity provided that the characteristic of $\K$ is either zero or it exceeds $n$. In our setting
 \[\Lambda_2 = \{(c,c^{-1}), (c\xi,c^{-1}\xi^{-1}), \ldots, (c\xi^{n-1},c^{-1}\xi^{-n+1})\},\]
where $c\in \K^*$ and $\xi$ is an $n$th root of unity in $\K$. Moreover, since $u$ takes $\Lambda_2$ to $\Lambda_3$,
\[\Lambda_3 = \{(-c,-c^{-1}), (-c\xi,-c^{-1}\xi^{-1}), \ldots, (-c\xi^{n-1},-c^{-1}\xi^{-n+1})\}.\]
It should be noted that such sets $\Lambda_2$ and $\Lambda_3$ may coincide, and this occurs if and only if $n$ is even since $\xi^{n/2}=-1$ for $n$ even. Therefore $n$ is odd and then
\[\Lambda_1 = \{(c^{-2}), (c^{-2}\xi), \ldots, (c^{-2}\xi^{n-1}) \}, \]
where $(m)$ denotes the infinite point of the affine line with slope $m$. Therefore the following results are obtained.
\begin{lemma} \label{lm:conicline_example} For $n$ odd, the above conic-line type dual $3$-net is in perspective position with center $T$. \qed
\end{lemma}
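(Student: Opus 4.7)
The plan is to verify directly from the explicit parametrization three things: that the three components are pairwise disjoint, that every line through $T=(0,0)$ meeting one component meets all three (perspective position), and that every line through points from two distinct components also meets the third (the dual $3$-net axioms). Each reduces to a short calculation with powers of the primitive $n$th root $\xi$, in which the hypothesis $n$ odd plays a decisive role.

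Disjointness is almost immediate: $\Lambda_1$ lies on the line at infinity $\ell$ while $\Lambda_2,\Lambda_3$ are affine, so only $\Lambda_2\cap\Lambda_3=\emptyset$ needs checking. An identification $(c\xi^i,c^{-1}\xi^{-i})=(-c\xi^j,-c^{-1}\xi^{-j})$ would force $\xi^{i-j}=-1$, which is impossible since $-1$ is not an $n$th root of unity when $n$ is odd.

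For the perspective position with center $T$, I would intersect $\Lambda_2$ with the line through $T$ of slope $m$: the condition is $m=c^{-2}\xi^{-2i}$ for some $i\in\{0,\ldots,n-1\}$. Because $\gcd(2,n)=1$, the map $i\mapsto -2i\pmod n$ is a bijection of $\mathbf Z/n\mathbf Z$, so the set of slopes of lines through $T$ meeting $\Lambda_2$ equals $\{c^{-2}\xi^k\mid 0\le k<n\}$, which is precisely $\Lambda_1$; the same computation applies to $\Lambda_3$ (the sign in the coordinates cancels). Moreover the bijection shows each such line meets $\Lambda_2$ and $\Lambda_3$ in exactly one point each, which is the perspective position axiom.

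For the dual $3$-net property, take $P_2=(c\xi^i,c^{-1}\xi^{-i})\in\Lambda_2$ and $P_3=(-c\xi^j,-c^{-1}\xi^{-j})\in\Lambda_3$. Using the identity $\xi^{-i}+\xi^{-j}=\xi^{-i-j}(\xi^i+\xi^j)$, the slope of $P_2P_3$ collapses to $c^{-2}\xi^{-i-j}$, which is a point of $\Lambda_1$; the cancellation is legitimate because $\xi^i+\xi^j\neq 0$ for $n$ odd, as above. Conversely, fixing a point $(c^{-2}\xi^k)\in\Lambda_1$ and $P_2\in\Lambda_2$, the line joining them meets $\Lambda_3$ at the unique $P_3$ whose index satisfies $j\equiv-i-k\pmod n$, so this intersection consists of exactly one point of $\Lambda_3$. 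A symmetric argument handles pairs from $\Lambda_1$ and $\Lambda_3$. This verifies the dual $3$-net axioms and, together with the previous paragraph, the perspective position with center $T$.

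The only potential obstacle is bookkeeping with the roots of unity; all substantive issues, namely the bijectivity of doubling on $\mathbf Z/n\mathbf Z$ and the nonvanishing of $\xi^i+\xi^j$, reduce to $n$ being odd, which is the hypothesis. No deeper geometric input is needed because Proposition \ref{pr:ca-coll} and Lemma \ref{lemclt} have already pinned down the configuration up to this explicit form.
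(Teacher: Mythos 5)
Your verification is correct, and it is essentially the argument the paper intends: the authors derive the explicit parametrization of $\Lambda_1,\Lambda_2,\Lambda_3$ and then state the lemma with no further proof, leaving exactly the routine checks you carry out (disjointness, the slope computation $c^{-2}\xi^{-i-j}$ for secants of the conic, and the bijectivity of $i\mapsto -2i \pmod n$ for lines through $T$), all of which hinge on $-1$ not being an $n$th root of unity for $n$ odd. Your write-up simply makes explicit what the paper's \qed suppresses.
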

\begin{proposition} \label{pr:conicline} Let $\K$ be an algebraically closed field of characteristic zero or greater than $n$. Then every conic-line type dual $3$-net of order $n$ which is embedded in $\PG(2,\K)$ in perspective position is projectively equivalent to the example given in Lemma \ref{lm:conicline_example}. In particular, $n$ is odd.
\end{proposition}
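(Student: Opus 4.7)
The plan is to turn the calculation carried out in the paragraphs immediately preceding the proposition into a clean proof, doing three things: fixing coordinates, applying the existing parametrization result for components on a conic, and then reading off $\Lambda_1$ and checking the parity of $n$.

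First, by Lemma~\ref{lemclt} we may apply a projective change of coordinates so that $T=(0,0,1)$, $\ell:Z=0$, and $\cC:XY=Z^2$. In affine terms, $T$ is the origin, $\ell$ is the line at infinity, $\cC$ is the affine hyperbola $xy=1$, and the $(T,\ell)$-perspectivity $u$ produced by Proposition~\ref{pr:ca-coll} reduces to $(x,y)\mapsto(-x,-y)$, since it is involutorial (as $\kappa=-1$) and fixes $\cC$. This normalization is projective, so no generality is lost.

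Second, I would invoke the parametrization recalled from \cite[Section~4.3]{knp_3}: under the characteristic hypothesis (characteristic $0$ or greater than $n$), any component of a dual $3$-net lying on a nonsingular conic is, in the above affine frame on $xy=1$, of the form $\{(c\xi^i,c^{-1}\xi^{-i})\mid 0\le i\le n-1\}$ for some $c\in\K^*$ and some primitive $n$th root of unity $\xi\in\K$. Applying this to $\Lambda_2$ and then using $\Lambda_3=\Lambda_2^u$ yields the explicit expressions for $\Lambda_2$ and $\Lambda_3$ displayed in the text.

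Third, I would use the defining disjointness property of a dual $3$-net to force $n$ odd. A point $(c\xi^i,c^{-1}\xi^{-i})\in\Lambda_2$ coincides with $(-c\xi^j,-c^{-1}\xi^{-j})\in\Lambda_3$ if and only if $\xi^{i-j}=-1$, and since $\xi$ is a primitive $n$th root of unity this happens for some $i,j$ exactly when $n$ is even (take $i-j=n/2$). Hence $\Lambda_2\cap\Lambda_3=\emptyset$ forces $n$ to be odd. With $n$ odd, the slope of the line joining $T$ to $(c\xi^i,c^{-1}\xi^{-i})$ equals $c^{-2}\xi^{-2i}$, and as $i$ runs over $\{0,\dots,n-1\}$ the map $i\mapsto -2i$ is a bijection modulo $n$ (because $\gcd(2,n)=1$); hence $\Lambda_1$, being the set of points at infinity of the $n$ lines through $T$ meeting $\cC\cap(\Lambda_2\cup\Lambda_3)$, equals $\{(c^{-2}\xi^j)\mid 0\le j\le n-1\}$, which is exactly the configuration of Lemma~\ref{lm:conicline_example}.

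The only real obstacle is verifying that the parametrization from \cite{knp_3} applies rigidly enough to yield the uniqueness up to projectivity; once one accepts it, the rest is forced by the normalization of Lemma~\ref{lemclt} and the two elementary checks above (the disjointness of $\Lambda_2,\Lambda_3$ and the bijectivity of $i\mapsto -2i\pmod n$). Both of these genuinely require $n$ odd, which is why the parity statement appears as an automatic corollary rather than an extra hypothesis.
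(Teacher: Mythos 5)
Your proposal is correct and follows essentially the same route as the paper: the paper's argument for Proposition \ref{pr:conicline} is precisely the discussion preceding Lemma \ref{lm:conicline_example} --- normalize via Lemma \ref{lemclt}, invoke the parametrization of conic-contained components from \cite[Section 4.3]{knp_3}, observe that $\Lambda_2$ and $\Lambda_3$ coincide exactly when $n$ is even (so disjointness forces $n$ odd), and read off $\Lambda_1$ at infinity. Your added details (the slope $c^{-2}\xi^{-2i}$ and the bijectivity of $i\mapsto -2i \pmod n$) only make explicit what the paper leaves implicit.
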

Proposition \ref{pr:conicline} has the following corollary.
\begin{corollary}  Let $\K$ be an algebraically closed field of characteristic zero or greater than $n$. Then no dual $4$-net of order $n$ embedded in $\PG(2,\K)$ has a derived dual $3$-net of conic-line type. 
\end{corollary}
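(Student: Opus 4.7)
The plan is to contrast two facts about the derived dual $3$-net $(\Lambda_1,\Lambda_2,\Lambda_3)$ obtained by removing $\Lambda_4$ from a putative dual $4$-net $(\Lambda_1,\Lambda_2,\Lambda_3,\Lambda_4)$: on the one hand, the $4$-net axiom forces every point of $\Lambda_4$ to act as a center of perspective for the derived $3$-net, yielding $n$ distinct centers; on the other hand, Lemma \ref{lemclt} shows that a conic-line type dual $3$-net in perspective position admits at most one center.

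First, I would verify that every $T\in\Lambda_4$ is a center of perspective for $(\Lambda_1,\Lambda_2,\Lambda_3)$. Since the components of a dual $4$-net are pairwise disjoint, $T$ lies off $\Lambda_1\cup\Lambda_2\cup\Lambda_3$. Any line $m$ through $T$ meeting some $\Lambda_i$ with $1\le i\le 3$ then meets two distinct components of the dual $4$-net, and so, by the defining property of a dual $4$-net, it meets each of $\Lambda_1,\Lambda_2,\Lambda_3,\Lambda_4$ in exactly one point. In particular $m$ is a line of the derived $3$-net, and the perspective condition at $T$ is satisfied. This shows that $(\Lambda_1,\Lambda_2,\Lambda_3)$ is in perspective position with respect to each of the $|\Lambda_4|=n$ points of $\Lambda_4$.

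Second, I would invoke Lemma \ref{lemclt}: for any conic-line type dual $3$-net in perspective position, say with $\Lambda_1$ carried by a line $\ell$ and $\Lambda_2\cup \Lambda_3$ carried by a nonsingular conic $\cC$, every center of perspective must coincide with the pole of $\ell$ in the polarity determined by $\cC$. Since this pole is a single point (and, for $n\ge 3$, both $\ell$ and $\cC$ are intrinsic to the $3$-net, being determined by $\Lambda_1$ and $\Lambda_2\cup\Lambda_3$ respectively), the derived $3$-net would admit at most one center of perspective. Combining the two observations, $n\le 1$, contradicting the fact that any dual $4$-net has order $n\ge 3$.

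The only delicate point is that Lemma \ref{lemclt} is stated under the hypothesis $n\ge 5$ (used through Proposition \ref{pr:ca-coll} to identify $\cC$ with its image under the perspectivity $u$). For the two borderline orders $n\in\{3,4\}$ one either appeals to the fact that no dual $4$-net of order $4$ embedded in $\PG(2,\K)$ is known to exist (under the characteristic hypothesis) and that the unique dual $4$-net of order $3$ has a derived $3$-net which is not of conic-line type, or else one gives a direct elementary verification in these two cases. Aside from this small-$n$ bookkeeping, the argument is the clean two-line counting above, and I expect no further obstacle.
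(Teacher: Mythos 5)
Your argument is correct, but it follows a genuinely different route from the paper's. The paper also first disposes of $n\in\{3,4\}$ exactly as you do (the unique $4$-net of order $3$ has no linear component, and no $4$-net of order $4$ exists), but for $n\ge 5$ it applies Proposition \ref{pr:conicline} twice, to the derived $3$-nets $(\Lambda_1,\Lambda_2,\Lambda_3)$ and $(\Lambda_1,\Lambda_2,\Lambda_4)$: the resulting conics $\cC\supseteq\Lambda_2\cup\Lambda_3$ and $\cD\supseteq\Lambda_2\cup\Lambda_4$ share the $n\ge 5$ points of $\Lambda_2$, hence coincide, so $(\Lambda_2,\Lambda_3,\Lambda_4)$ lies on a single nonsingular conic, contradicting an external result of Blokhuis--Korchm\'aros--Mazzocca. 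Your proof instead counts centers of perspective: each of the $n$ points of $\Lambda_4$ is a center for $(\Lambda_1,\Lambda_2,\Lambda_3)$ (your verification of this from the $4$-net axiom is exactly right), while Lemma \ref{lemclt} forces every center to be the pole of $\ell$ with respect to $\cC$, a single point, so $n\le 1$. This is more self-contained -- it avoids the citation of \cite{bkm} and avoids having to argue that the second derived $3$-net $(\Lambda_1,\Lambda_2,\Lambda_4)$ is again of conic-line type (a step the paper asserts with ``similarly'' but which itself rests on the classification of $3$-nets with a linear component) -- at the price of leaning on Lemma \ref{lemclt}, which the paper's proof of the corollary does not formally need. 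Your caveat about the $n\ge 5$ hypothesis in Lemma \ref{lemclt} is handled the same way the paper handles it, so there is no gap there.
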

\begin{proof} Since the only $4$-net of order $3$ contains no linear component, and there exist no $4$-nets of order $4$, we may assume that $n\ge 5$. Let $\Lambda_1,\Lambda_2,\Lambda_3,\Lambda_4$ be a dual $4$-net with a derived dual $3$-net  of conic-line type. Without loss of generality, $\Lambda_1$ lies on a line.  Then the derived dual $3$-net $(\Lambda_1,\Lambda_2,\Lambda_3)$ is  of conic-line type. From Proposition \ref{pr:conicline}, there exists a non-singular conic $\cC$ containing $\Lambda_2\cup \Lambda_3$. Similarly, $(\Lambda_1,\Lambda_2,\Lambda_4)$ is  of conic-line type and $\Lambda_2\cup \Lambda_4$ is contained in a non-singular conic $\cD$. Since $\cC$ and $\cD$ share
$\Lambda_2$, the hypothesis $n\ge 5$ yields that $\cC=\cD$. Therefore,  the dual $3$-net $(\Lambda_2,\Lambda_3,\Lambda_4)$  is contained in $\cC$. But this contradicts \cite[Theorem 6.1]{bkm}.
\end{proof}
\section{Proper algebraic dual $3$-nets in perspective positions}
With the terminology introduced in \cite{ys2004}, see also \cite{knp_3}, a dual $3$-net $(\Lambda_1,\Lambda_2,\Lambda_3)$ embedded in $\PG(2,\K)$ is algebraic if its components lie in a plane cubic $\Gamma$ of $\PG(2,\K)$. If the plane cubic is reducible, then  $(\Lambda_1,\Lambda_2,\Lambda_3)$ is either regular or of conic-line type. Both cases have already been considered in the previous sections. Therefore, $\Gamma$ may be assumed irreducible, that is, $(\Lambda_1,\Lambda_2,\Lambda_3)$ is a proper algebraic dual $3$-net.

It is well known that plane cubic curves have quite a different behavior over fields of characteristic $2,3$. Since the relevant case in our work is
in zero characteristic or in positive characteristic greater than the order of the $3$-net considered, we assume that the characteristic of $\K$ is neither $2$ nor $3$. From classical results, $\Gamma$ is either nonsingular or it has at most one singular point, and in the latter case the point is either a node or a cusp. Accordingly, the number of inflection points of $\Gamma$ is $9$, $3$ or $1$. If $\Gamma$ has a cusp then it has an affine equation $Y^2=X^3$ up to a change of the reference frame. Otherwise, $\Gamma$ may be taken in its Legendre form
\[Y^2=X(X-1)(X-c)\] so that $\Gamma$ has an infinite point in $Y_\infty=(0,1,0)$ and it is nonsingular if and only if $c(c-1)\neq 0$.
The projective equivalence class of a nonsingular cubic is uniquely determined by the \textit{$j$-invariant}; see \cite[Section IV.4]{Hartshorne}. Recall that $j$ arises from the cross-ratio of the
four tangents which can be drawn to $\Gamma$ from a point of $\Gamma$ and it takes into account the fact that four lines have six different permutations. Formally, let $t_1,t_2,t_3,t_4$ be indeterminates over $\K$. The cross-ratio of $t_1,t_2,t_3,t_4$ is the rational expression
\[k=k(t_1,t_2,t_3,t_4)=\frac{(t_3-t_1)(t_2-t_4)}{(t_2-t_3)(t_4-t_1)}.\]
$k$ is not symmetric in $t_1,t_2,t_3,t_4$; by permuting the indeterminates, $k$ can take $6$ different values
\[k,\quad \frac{1}{k},\quad 1-k,\quad \frac{1}{1-k},\quad \frac{k}{k-1},\quad 1-\frac{1}{k}.\]
The maps $k\mapsto 1/k$ and $k\mapsto 1-k$ generate the \textit{anharmonic group} of order $6$, which acts regularly on the $6$ values of the cross-ratio. It is straightforward to check that the rational expression
\[u=u(k)=\frac{(k^2-k+1)^3}{(k+1)^2(k-2)^2(2k-1)^2}\]
is invariant under the substitutions $k\mapsto 1/k$ and $k\mapsto 1-k$. Hence, $u$ is a symmetric rational function of $t_1,t_2,t_3,t_4$. Assume that $t_1,t_2,t_3,t_4$ are roots of the quartic polynomial
\[\varphi(t)=\alpha_0+\alpha_1t+\alpha_2t^2+\alpha_3t^3+\alpha_4t^4.\]
Then $u$ can be expressed by the coefficients $\alpha_0,\ldots,\alpha_4$ as
\begin{equation} \label{eq:coeff_expr}
u=\frac{\left( 12\alpha_0\alpha_4-3\alpha_1\alpha_3+ \alpha_2^2 \right)^3}{\left( 72\alpha_0\alpha_2\alpha_4-27\alpha_0\alpha_3^2-27\alpha_1^2 \alpha_4-2\alpha_2^3+9\alpha_1\alpha_2\alpha_3  \right)^2}.
\end{equation}
 It may be observed that if $\Gamma$ is given in Legendre form then its $j$-invariant is
\begin{equation} \label{eq:j}
j(\Gamma)=2^8\frac{(c^2-c+1)^3}{c^2(c-1)^2}.
\end{equation}
Assume that $j(\Gamma)=0$, that is, $c^2-c+1=0$. Then, the Hessian $H$ of $\Gamma$ is
\begin{equation} \label{eq:Hessian}
H:\left(X-\frac{c+1}{3}\right)\left(Y+\sqrt{\frac{2c-1}{3}}\right)\left(Y-\sqrt{\frac{2c-1}{3}}\right)=0.
\end{equation}
This shows that $H$ is the union of three nonconcurrent lines whose intersection points are the \textit{corners} of $\Gamma$.

To our further investigation we need the following result.
\begin{proposition} \label{pr:cubic_j0}
Let $\Gamma$ be an irreducible cubic curve in $\PG(2,\K)$ defined over an algebraically closed field $\K$ of characteristic different from  $2$ and $3$.  For each $i=1,\ldots,7$, take pairwise distinct nonsingular points $P_i,Q_i,R_i\in \Gamma$ such that the triple $\{P_i,Q_i,R_i\}$ is collinear. Assume that there exists a point $T$ off $\Gamma$ such that quadruples $\{T,P_i,Q_i,R_i\}$ are collinear and that their cross-ratio $(T,P_i,Q_i,R_i)$ is a constant $\kappa$. Then $\Gamma$ has $j$-invariant $0$ and $T$ is one of the three corners of $\Gamma$.
\end{proposition}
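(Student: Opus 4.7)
The plan is to first upgrade the seven-lines hypothesis to the full pencil of lines through $T$ via a degree bound, and then to extract from the resulting rigidity an order-$3$ plane projectivity with center $T$ which preserves $\Gamma$.

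\emph{Step 1 (extension to the pencil).} I would place $\Gamma$ in Weierstrass form $Y^2=X^3+AX+B$ and translate so that $T=(x_0,y_0)$ sits at the origin; since $T\notin\Gamma$, the quantity $C_0:=x_0^3+Ax_0+B-y_0^2$ is nonzero. A line of slope $m$ through $T$ meets $\Gamma$ at three points whose $x$-coordinates are the roots $\xi_1,\xi_2,\xi_3$ of
\[
\xi^3+(3x_0-m^2)\xi^2+(3x_0^2+A-2my_0)\xi+C_0=0,
\]
so the elementary symmetric functions $e_1',e_2',e_3'$ are polynomials in $m$ of respective degrees $2$, $1$, $0$. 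Feeding these into formula (\ref{eq:coeff_expr}) applied to the quartic $t(t-\xi_1)(t-\xi_2)(t-\xi_3)$ exhibits the $u$-invariant of $\{T,P,Q,R\}$ as a rational function $u(m)=N(m)/D(m)$ with $\deg N,\deg D\le 6$. The seven hypotheses yield seven distinct zeros of $N(m)-u(\kappa)D(m)$, a polynomial of degree at most $6$; it must vanish identically, so $u(m)\equiv u(\kappa)$ on the full pencil through $T$.

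\emph{Step 2 (cross-ratio is equianharmonic).} I would next show that $\kappa$ satisfies $\kappa^2-\kappa+1=0$. The degenerate values $\kappa\in\{0,1,\infty\}$ are excluded by distinctness of $T,P_i,Q_i,R_i$, while the harmonic values $\kappa\in\{-1,1/2,2\}$ would force $D(m)\equiv 0$; a coefficient-by-coefficient analysis of this identity, using $C_0\neq 0$, leads to a contradiction. Outside these values, $\kappa\neq 1/\kappa$, and I define a rational self-map $\rho:\Gamma\to\Gamma$ by: for $P\in\Gamma$ with $TP\cap\Gamma=\{P,A,B\}$, set $\rho(P)=A$ if $(T,P,A,B)=\kappa$, else $\rho(P)=B$. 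Since $\rho$ preserves each fiber of the projection $\pi_T:\Gamma\to\mathbb{P}^1$, the image $\rho(A)$ must lie in $\{P,B\}$; combined with the identities $(T,A,P,B)=1-\kappa$ and $(T,A,B,P)=1/(1-\kappa)$, this forces $\kappa^2-\kappa+1=0$. Consequently $\rho$ cyclically permutes the three intersection points on each line of the pencil and has order $3$.

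\emph{Step 3 (projectivity and Hessian corner).} The map $\rho$ extends to a morphism $\Gamma\to\Gamma$ and then uniquely to a projectivity $\widetilde{\rho}$ of $\PG(2,\K)$ that fixes $T$ and preserves each line of the pencil through $T$; such a projectivity is necessarily a perspectivity of order $3$ with center $T$. Restricted to $\Gamma$, $\widetilde{\rho}$ is an order-$3$ automorphism with three fixed points on $\Gamma$ (the intersection of its axis with $\Gamma$), hence the automorphism group of $\Gamma$ as an elliptic curve contains an element of order $3$. In characteristic different from $2$ and $3$, this occurs only when $j(\Gamma)=0$. Writing $\Gamma$ in Legendre form with $c^2-c+1=0$, the Hessian $H$ decomposes by (\ref{eq:Hessian}) into three non-concurrent lines meeting in three corners. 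Any orbit of three non-fixed lines under a perspectivity is a pencil through a common point of the axis and hence concurrent, so $\widetilde{\rho}$ must fix each Hessian line setwise; each such line is then either the axis of $\widetilde{\rho}$ or passes through $T$, and the only configuration consistent with non-concurrence is two Hessian lines through $T$ together with the axis, placing $T$ at the intersection of two Hessian lines, i.e., at a corner. The main obstacle I expect is the rigorous extension of $\rho$ to a plane projectivity, in particular controlling its behaviour over the (at most six) ramification points of $\pi_T$ where two of $P,Q,R$ coalesce and the set-theoretic recipe defining $\rho$ degenerates.
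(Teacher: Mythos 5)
Your Step 1 is exactly the paper's opening move (the paper works in Legendre rather than Weierstrass form, but the count ``seven slopes versus a degree-$6$ polynomial $f(m)^3-u(\kappa)g(m)^2$'' is the same), and your Step 3, granted its inputs, is a reasonable geometric reading of the conclusion. The genuine gap is in Step 2, in the definition of $\rho$. What Step 1 gives you is only that the \emph{unordered} cross-ratio class of $\{T,P,A,B\}$ equals that of $\kappa$ on every line of the pencil, i.e.\ that the ordered cross-ratios lie in the six-element anharmonic orbit of $\kappa$. Fixing $P$ in the second slot, the two orderings of the residual pair realize a single reciprocal pair $\{k,1/k\}$ from that orbit; if the orbit has six distinct elements, then on each line exactly one of the three intersection points, placed first, yields $\kappa$, and for the other two points \emph{neither} $(T,P,A,B)$ nor $(T,P,B,A)$ equals $\kappa$. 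So the recipe ``$\rho(P)=A$ if $(T,P,A,B)=\kappa$, else $\rho(P)=B$'' does not define a self-map of $\Gamma$: the ``else'' branch silently assigns a point for which no ordering gives $\kappa$. The ensuing deduction --- ``$\rho(A)\in\{P,B\}$ together with $(T,A,P,B)=1-\kappa$ and $(T,A,B,P)=1/(1-\kappa)$ forces $\kappa^2-\kappa+1=0$'' --- is therefore circular: the premise that $\rho$ is defined at $A$ is precisely the statement that $\kappa$ lies in $\{1-\kappa,1/(1-\kappa)\}$, which is what you are trying to prove. The gap is repairable (for instance: the locus of $P\in\Gamma$ admitting an ordering with $(T,P,A,B)=\kappa$ is Zariski-closed and meets every fiber of $\pi_T$, hence equals the irreducible curve $\Gamma$; yet it meets a generic fiber in one point unless the orbit degenerates, forcing $\kappa$ harmonic or equianharmonic), but you must supply such an argument, and you must then actually carry out the exclusion of the harmonic values, which you only assert.

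A second, smaller gap: the proposition allows $\Gamma$ to be nodal or cuspidal, while your Step 3 (``an order-$3$ automorphism with a fixed point forces $j=0$'') is an elliptic-curve statement. For the cuspidal cubic the smooth locus carries the additive group, whose automorphisms include maps of order $3$ in characteristic prime to $3$, so singular cubics are not excluded by your argument and need separate treatment; likewise the Hessian-corner discussion presupposes $\Gamma$ nonsingular with $j=0$. The paper sidesteps all of this by remaining computational after your Step 1: constancy of $f(m)^3/g(m)^2$ gives the identity $3f'(m)g(m)-2f(m)g'(m)\equiv 0$, whose coefficients $\beta_0,\dots,\beta_3$ must vanish because $T\notin\Gamma$; a linear combination of the $\beta_i$ then yields $c^2(c-1)^2(c^2-c+1)=0$, the singular cases $c(c-1)=0$ are killed by $T\notin\Gamma$, and solving $\beta_1=\beta_3=0$ pins $T$ to a corner. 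Your route is more conceptual and, once the two gaps above are closed, would buy a structural explanation (the order-$3$ perspectivity of Theorem \ref{thm:cubic}) rather than a coefficient identity, but as written it does not yet constitute a proof.
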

\begin{proof}
We explicitly present the proof for nonsingular cubics and for cubics with a node, as the cuspidal case can be handled with similar, even much simpler, computation.
Therefore, $\Gamma$ has $9$ or $3$ inflection points. Pick an inflection point off the tangents to $\Gamma$ through $T$. Fix a reference frame such that this inflection point is $Y_\infty=(0,1,0)$ and that $\Gamma$ is in Legendre form
\[Y^2=X(X-1)(X-c).\]
Then $T=(a,b)$ is an affine point and so are $P_i,Q_i,R_i$. Let $\ell$ be the generic line through $T$ with parametric equation $x=a+t,\,y=b+mt$. The parameters of the points of $\ell$ which also lie in $\Gamma$, say $P,Q,R$, are the roots $\tau,\tau',\tau''$ of the cubic polynomial
\[h_1(t)=(a+t)(a+t-1)(a+t-c)-(b+tm)^2.\]
The cross-ratio $k$ of $T,P,Q,R$ is equal to the cross-ratio of $0,\tau,\tau',\tau''$, and its $u(k)$ value can be computed from \eqref{eq:coeff_expr} by substituting
\begin{align*}
\alpha_0&= 0,\\
\alpha_1&= a^3-a^2c-a^2+ac-b^2,\\
\alpha_2&= 3a^2-2a-2ac+c-2bm,\\
\alpha_3&= 3a-1-c-m^2,\\
\alpha_4&= 1.
\end{align*}
{}From this, $u(k)=f(m)^3/g(m)^2$, where $f,g$ have $m$-degree $2$ and $3$, respectively. Let $m_i$ be the slope of the line containing the points $T,P_i,Q_i,R_i$ ($i=1,\ldots,7$). By our assumption,
\[u(\kappa)=\frac{f(m_1)^3}{g(m_1)^2}=\cdots=\frac{f(m_7)^3}{g(m_7)^2},\]
which implies $\frac{f(m)^3}{g(m)^2}=u(\kappa)$ for all $m$, and this holds true even if one of the $m_i$'s is infinite. If the rational function $\frac{f(m)^3}{g(m)^2}$ is constant then either its derivative is constant zero, or $f(m)\equiv 0$, or $g(m)\equiv 0$. In any of these cases,
\begin{equation} \label{eq:3fg2fg}
3f'(m)g(m)-2f(m)g'(m)\equiv 0.
\end{equation}
By setting
\begin{align*}
\beta_0&= 2b(c^2-c+1),\\
\beta_1&= 2ac-2ac^2-2a+3b^2+c^2+c,\\
\beta_2&= -2b(3a-1-c),\\
\beta_3&= 3a^2-2ac+c-2a,
\end{align*}
\eqref{eq:3fg2fg} becomes
\[54(b^2-a(a-1)(a-c))^2 (\beta_0+\beta_1m+\beta_2m^2+\beta_3m^3)\equiv 0.\]
As $T$ is not on $\Gamma$, $b^2-a(a-1)(a-c)\neq 0$ and $\beta_0=\cdots=\beta_3=0$ holds. Define
\begin{align*}
\gamma_0&= -3b(c-2)(2c-1)(c+1),\\
\gamma_1&= -2(c^2-c+1)(6a-4+3c+6ac^2+3c^2-4c^3-6ac),\\
\gamma_2&= -6b(c^2-c+1)^2,\\
\gamma_3&= -8(c^2-c+1)^3.
\end{align*}
Then
\[0=\beta_0\gamma_0+\beta_1\gamma_1+\beta_2\gamma_2+\beta_3\gamma_3=18 c^2(c-1)^2(c^2-c+1).\]
If $c(c-1)=0$ then $\beta_0=2b=0$ and $\beta_1=2(a-c)=0$, which implies $T\in \Gamma$, a contradiction. This proves $c^2-c+1=0$ and $j(\Gamma)=0$ by \eqref{eq:j}. Moreover, $\beta_1=3b^2+c^2+c=0$ amd $\beta_3=\frac{1}{3}(3a-c-1)^2=0$ imply
\[ a=\frac{c+1}{3}, \hspace{1cm} b=\sqrt{\frac{1-2c}{3}},\]
therefore by \eqref{eq:Hessian}, the lines $X=a$ and $Y=b$ through $T$ are components of the Hessian curve of $\Gamma$. This shows that $T$ is a corner point of $\Gamma$.
\end{proof}

\begin{theorem} \label{thm:cubic}
Under the hypotheses of Proposition \ref{pr:cubic_j0},
\begin{enumerate}[(i)]
\item the affine reference frame in $\PG(2,\K)$ can be chosen such that $\Gamma$ has affine equation $X^3+Y^3=1$ and that $T=(0,0)$;
\item there is a perspectivity $u$ of order $3$ with center $T$ leaving $\Gamma$ invariant;
\item the constant cross-ratio $\kappa$ is a root of the equation $X^2-X+1=0$.
\end{enumerate}
\end{theorem}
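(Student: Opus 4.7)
The plan is to use Proposition \ref{pr:cubic_j0} to normalize $\Gamma$ and $T$ by a projective change of coordinates, then to exhibit an explicit order-$3$ perspectivity preserving $\Gamma$ with center $T$, and finally to force $\kappa^2-\kappa+1=0$ from the fact that this perspectivity permutes cyclically the three points cut on $\Gamma$ by each line through $T$.

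For (i), I will use that over an algebraically closed field of characteristic different from $2$ and $3$ any two nonsingular plane cubics with the same $j$-invariant are projectively equivalent, together with the fact that the Fermat cubic $X^3+Y^3-Z^3=0$ has $j=0$. Bringing $\Gamma$ to Fermat form, the Hessian becomes $XYZ=0$ and its corners are the three fundamental points $(1,0,0)$, $(0,1,0)$, $(0,0,1)$. Since the coordinate permutations already lie in the projective automorphism group of the Fermat cubic, I may compose with one and assume $T=(0,0,1)$; dehomogenizing with respect to $Z$ yields the affine equation $X^3+Y^3=1$ with $T=(0,0)$.

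For (ii), I will pick a primitive cube root of unity $\omega\in\K$ (available since the characteristic of $\K$ is not $3$) and define $u\colon(x,y)\mapsto(\omega x,\omega y)$. In projective coordinates this reads $(X,Y,Z)\mapsto(\omega X,\omega Y,Z)$, whose fixed locus consists of the point $T=(0,0,1)$ together with the line $Z=0$; hence $u$ is a homology of order $3$ with center $T$ and axis the line at infinity. The invariance $\Gamma^u=\Gamma$ is immediate from $\omega^3=1$.

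For (iii), I will take any line $\ell$ through $T$ meeting $\Gamma$ in three distinct points $P,Q,R$. Because $u$ fixes $T$ and stabilizes both $\ell$ and $\Gamma$, it permutes $\{P,Q,R\}$, and its order being $3$ forces the induced permutation to be a $3$-cycle, say $P\mapsto Q\mapsto R\mapsto P$. Projective invariance of cross-ratio then gives
\[\kappa=(T,P,Q,R)=(T,u(P),u(Q),u(R))=(T,Q,R,P),\]
and the cyclic shift of the last three arguments corresponds to the anharmonic substitution $\lambda\mapsto 1/(1-\lambda)$, so $\kappa=1/(1-\kappa)$, i.e.\ $\kappa^2-\kappa+1=0$. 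The main technical point I anticipate is the normalization in (i): one has to verify that the projective equivalence taking $\Gamma$ to Fermat form can be post-composed so as to carry $T$ to the fundamental point $(0,0,1)$, which reduces to the transitivity of the projective automorphism group of $X^3+Y^3-Z^3$ on the three corners of its Hessian—a fact that follows at once from the coordinate-permutation symmetry of the Fermat cubic.
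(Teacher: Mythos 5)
Your proposal is correct, and parts (i) and (ii) coincide with the paper's proof: the paper likewise invokes the fact that the Fermat cubic has $j$-invariant $0$, computes its Hessian $XYZ=0$, and moves $T$ to $(0,0,1)$ by an automorphism of the Fermat cubic permuting the corners (it uses $(x,y,z)\mapsto(-y,z,x)$; note that a bare coordinate transposition such as $(X,Y,Z)\mapsto(Z,Y,X)$ does \emph{not} preserve $X^3+Y^3-Z^3=0$, so your phrase ``the coordinate permutations already lie in the automorphism group'' needs the sign twist, though the transitivity on corners that you actually use is true). Part (iii) is where you genuinely diverge: the paper parametrizes the line $Y=mX$, writes down the three intersection points with $X^3+Y^3=1$ explicitly as $\bigl(\varepsilon^i/\sqrt[3]{1+m^3},\,\varepsilon^i m/\sqrt[3]{1+m^3}\bigr)$, and computes the cross-ratio with $(0,0)$ directly to be $-\varepsilon$, whereas you observe that $u$ induces a $3$-cycle on $\{P,Q,R\}$ and that invariance of the cross-ratio under $u$ forces $\kappa$ to be fixed by an order-$3$ element of the anharmonic group, hence $\kappa^2-\kappa+1=0$. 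Your argument is computation-free and also explains \emph{why} the answer is a primitive sixth root of unity; the paper's computation buys the explicit value $\kappa=-\varepsilon$ and, as a by-product, re-verifies constancy over all lines through $T$. One small point to tighten: an order-$3$ map induces on a $3$-set either a $3$-cycle or the identity, so you should rule out the identity --- which is immediate, since the fixed points of $u$ on the line $\ell$ are only $T$ and $\ell\cap\{Z=0\}$, so $u$ cannot fix three further points of $\ell$.
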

\begin{proof}
(i) According to \cite[Example IV.4.6.2]{Hartshorne}, the plane cubic $\Gamma'$ of equation $X^3+Y^3=Z^3$ has $j$-invariant $0$. Therefore, $\Gamma$ is projectively equivalent to $\Gamma'$. As the Hessian of $\Gamma'$ is $H':XYZ=0$, the corner points of $\Gamma'$ are $(1,0,0)$, $(0,1,0)$ and $(0,0,1)$. The projectivity $(x,y,z)\mapsto (-y,z,x)$, preserves $\Gamma'$, and permutes the corners, thus $T=(0,0,1)$ can be assumed w.l.o.g. 

(ii) $u$ is the map $(x,y)\to (\varepsilon x, \varepsilon y)$, where $\varepsilon$ is a third root of unity in $\K$. (iii) The line $Y=mX$ intersects $\Gamma'$ in the points
\[\left(\frac{1}{\sqrt[3]{1+m^3}},\frac{m}{\sqrt[3]{1+m^3}} \right),
\left(\frac{\varepsilon}{\sqrt[3]{1+m^3}},\frac{\varepsilon m}{\sqrt[3]{1+m^3}} \right),
\left(\frac{\varepsilon^2}{\sqrt[3]{1+m^3}},\frac{\varepsilon^2 m}{\sqrt[3]{1+m^3}} \right),\]
whose cross-ratio with $(0,0)$ is $\kappa=-\varepsilon$.
\end{proof}
Therefore the following result holds.
\begin{theorem}
\label{thcub}
Let $\K$ be an algebraically closed field of characteristic different from $2$ and $3$. Let $\Lambda=(\Lambda_1,\Lambda_2,\Lambda_3)$ be a dual $3$-net of order $n\geq 7$ embedded in $\PG(2,\K)$ which lies on an irreducible cubic curve $\Gamma$. If $\Gamma$ is either singular or is nonsingular with $j(\Gamma)\neq 0$ then $\Lambda$ is not in perspective position. If $j(\Gamma)=0$ then there are at most three points $T_1,T_2,T_3$ such that $\Lambda$ is in perspective position with center $T_i$.
\end{theorem}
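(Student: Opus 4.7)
The plan is to reduce Theorem \ref{thcub} directly to Proposition \ref{pr:cubic_j0} by verifying that any perspective center $T$ of $\Lambda$ satisfies its hypotheses.

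Suppose $\Lambda$ is in perspective position with center $T$. By the definition of perspective position, the $n$ lines through $T$ meeting $\Lambda_1$ are all lines of the dual $3$-net, so each meets $\Lambda_1$, $\Lambda_2$, $\Lambda_3$ in exactly one point, producing $n$ collinear triples $\{P_i,Q_i,R_i\}\subset\Gamma$ with $P_i\in\Lambda_1$, $Q_i\in\Lambda_2$, $R_i\in\Lambda_3$. Since $T\notin\Lambda_1\cup\Lambda_2\cup\Lambda_3$, the four points $T,P_i,Q_i,R_i$ are pairwise distinct; in particular $T\notin\Gamma$, for otherwise the line $TP_iQ_iR_i$ would share four distinct points with the irreducible cubic $\Gamma$, contradicting B\'ezout's theorem. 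A similar multiplicity consideration shows that no singular point $S$ of $\Gamma$ can lie in a component: every line through $S$ has local intersection multiplicity at least $2$ with $\Gamma$ at $S$, so it cannot also meet $\Gamma$ in three further distinct points, and therefore cannot be a line of the net passing through $T$. Thus every $P_i,Q_i,R_i$ is a nonsingular point of $\Gamma$, and Proposition \ref{pr:dual_constcross} supplies a common value $\kappa=(T,P_i,Q_i,R_i)$ for $i=1,\ldots,n$.

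With $n\geq 7$ such triples in hand, Proposition \ref{pr:cubic_j0} applies and forces both $j(\Gamma)=0$ and $T$ to be one of the three corners of $\Gamma$. The first assertion of the theorem follows contrapositively: if $\Gamma$ is singular, or nonsingular with $j(\Gamma)\neq 0$, then no such center $T$ can exist and $\Lambda$ admits no perspective position. For the second assertion, when $j(\Gamma)=0$ the Hessian \eqref{eq:Hessian} is the union of three non-concurrent lines whose three pairwise intersections are precisely the corners of $\Gamma$, giving at most three possible centers $T_1,T_2,T_3$. The only genuinely delicate step in the argument is the preliminary check that $T\notin\Gamma$ and that no singular point of $\Gamma$ lies in any $\Lambda_i$; once these are secured, the proof is a direct assembly of Propositions \ref{pr:dual_constcross} and \ref{pr:cubic_j0}, and I expect no further obstacles.
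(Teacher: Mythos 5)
Your proposal is correct and follows the same route as the paper: invoke Proposition \ref{pr:dual_constcross} to get the constant cross-ratio and then apply Proposition \ref{pr:cubic_j0} (via Theorem \ref{thm:cubic}) to force $j(\Gamma)=0$ and to locate $T$ among the three corners. The paper's own proof is just a two-line version of this; your additional B\'ezout checks that $T\notin\Gamma$ and that the points $P_i,Q_i,R_i$ are nonsingular are exactly the routine verifications the paper leaves implicit.
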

\begin{proof}
Let $T$ be a point such that $\Lambda$ is in perspective position with center $T$. By Proposition \ref{pr:dual_constcross}, $\Lambda$ has a constant cross-ratio $\kappa$, hence Theorem \ref{thm:cubic} applies over the algebraic closure of $\K$.
\end{proof}
Theorem \ref{thcub} has the following corollary.
\begin{corollary}  Let $\K$ be an algebraically closed field of characteristic different from $2$ and $3$. Then no dual $4$-net of order $n\geq 7$ embedded in $\PG(2,\K)$ has a derived dual $3$-net lying on a plane cubic. \qed
\end{corollary}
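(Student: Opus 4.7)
The plan is to use the fourth component $\Lambda_4$ as a large supply of perspective centers for the derived $3$-net $(\Lambda_1,\Lambda_2,\Lambda_3)$, and then to clash this lower bound against the upper bound of three centers provided by Theorem~\ref{thcub}.

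First I would observe that for any dual $4$-net $(\Lambda_1,\Lambda_2,\Lambda_3,\Lambda_4)$, every point $T\in\Lambda_4$ is automatically a perspective center for the derived $3$-net $(\Lambda_1,\Lambda_2,\Lambda_3)$. Indeed, any line through $T$ that meets some $\Lambda_i$ with $i\leq 3$ is, by the defining property of a $4$-net, a line of the $4$-net, and hence meets each of $\Lambda_1,\Lambda_2,\Lambda_3$ in exactly one point. Thus the derived $3$-net admits at least $|\Lambda_4|=n\geq 7$ distinct perspective centers.

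Next, assuming that $(\Lambda_1,\Lambda_2,\Lambda_3)$ lies on a plane cubic $\Gamma$, I would verify that $\Lambda_4\cap\Gamma=\emptyset$. If some $T\in\Lambda_4$ belonged to $\Gamma$, then a line of the $4$-net through $T$ would contain $T$ together with one point from each $\Lambda_i$ ($i=1,2,3$), giving four distinct collinear points of $\Gamma$, contradicting B\'ezout's theorem. So all $n$ perspective centers supplied by $\Lambda_4$ lie off $\Gamma$, exactly the setting in which Theorem~\ref{thcub} can be applied.

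It remains to split according to whether $\Gamma$ is irreducible or reducible. In the irreducible case, Theorem~\ref{thcub} immediately yields a contradiction, as $n\geq 7$ exceeds the maximum of three perspective centers allowed. In the reducible case, $\Gamma$ is a union of three lines or an irreducible conic together with a line, so the derived $3$-net is necessarily of triangular, pencil, or conic–line type; these three possibilities are ruled out respectively by Corollary~\ref{cor12jan} and by the preceding conic–line corollary. The step deserving most care is the B\'ezout verification, since it is what guarantees that Theorem~\ref{thcub} actually applies to each of the $n$ centers coming from $\Lambda_4$; once that is secured, the argument reduces to a straightforward counting clash between $n\geq 7$ and the bound $3$.
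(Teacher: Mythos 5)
Your argument is correct and is exactly the one the paper intends (the corollary is stated with a bare \qed precisely because the points of $\Lambda_4$ give $n\ge 7$ perspective centers for the derived $3$-net, clashing with the bound of three in Theorem~\ref{thcub}, while the reducible-cubic cases are already disposed of by Corollary~\ref{cor12jan} and the conic--line corollary). Your explicit B\'ezout check that the centers lie off $\Gamma$, and your explicit treatment of the reducible case, only make the implicit proof more complete.
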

Finally, we show the existence of proper algebraic dual $3$-nets in perspective position.
\begin{example}
Let $\K$ be a field of characteristic different from $2$ and $3$, $\Gamma:X^3+Y^3=Z^3$, $T=(0,0,1)$ and $u:(x,y,z)\mapsto (\varepsilon x,\varepsilon y,z)$ with third root of unity $\varepsilon$. The infinite point $O(1,-1,0)$ is an inflection point of $\Gamma$, left invariant by $u$. Since $u$ leaves $\Gamma$ invariant as well, $u$ induces an automorphism of the abelian group of $(\Gamma,+,O)$; we denote the automorphism by $u$, too. 
%
As the line $TP$ contains the points $P^u$, $P^{u^2}$ for any $P\in \Gamma$, one has $P+P^u+P^{u^2}=O$. 
Let $H$ be a subgroup of $(\Gamma,+)$ of finite order $n$ such that $H^{u}=H$. For any $P\in \Gamma$ with $P-P^u \not\in H$, the cosets
\[\Lambda_1=H+P, \hspace{1cm} \Lambda_2=H+P^u, \hspace{1cm} \Lambda_3=H+P^{u^2} \]
form a dual $3$-net which is in perspective position with center $T$. Indeed, for any $A_1+P\in \Lambda_1$ and $A_2+P^u\in \Lambda_2$, the line joining them passes through $-A_1-A_2+P^{u^2}\in \Lambda_3$.
\end{example}

\bibliographystyle{model1a-num-names}
\bibliography{<your-bib-database>}

\end{document}